\newtheorem{theorem}{Theorem}[section]
\newtheorem{lemma}[theorem]{Lemma}
\newtheorem{corollary}[theorem]{Corollary}
\newtheorem{question}[theorem]{Question}
\theoremstyle{definition}
\theoremstyle{remark}
\newcommand{\Q}{\mathbb{Q}}
\newcommand{\R}{\mathbb{R}}
\newcommand{\N}{\mathbb{N}}
\newcommand{\CC}{\mathbb{C}}
\newcommand{\A}{\mathcal{A}}
\newcommand{\B}{\mathcal{B}}
\newcommand{\C}{\mathcal{C}}
\renewcommand{\P}{\mathcal{P}}
\newcommand{\explicitSet}[1]{\left\lbrace #1 \right\rbrace}
\newcommand{\brackets}[1]{\left\langle #1 \right\rangle}
\newcommand{\set}[2]{\explicitSet{#1 \colon #2}}
\newcommand{\seq}[2]{\brackets{#1 \colon #2}}
\newcommand{\<}{\langle}
\renewcommand{\>}{\rangle}
\renewcommand{\a}{\alpha}
\renewcommand{\b}{\beta}
\newcommand{\g}{\gamma}
\newcommand{\dlt}{\delta}
\newcommand{\z}{\zeta}
\renewcommand{\k}{\kappa}
\newcommand{\s}{\sigma}
\renewcommand{\t}{\tau}
\newcommand{\w}{\omega}
\newcommand{\0}{\emptyset}
\newcommand{\sub}{\subseteq}
\newcommand{\rest}{\!\restriction\!}
\newcommand{\cat}{\!\,^{\frown}}
\newcommand{\homeo}{\approx}
\newcommand{\card}[1]{\left\lvert #1 \right\rvert}
\newcommand{\tr}[1]{[\![#1]\!]}
\newcommand{\PP}{\mathbb{P}}
\newcommand{\forces}{\Vdash}
\newcommand{\TT}{\mathbb{T}}
\newcommand{\continuum}{\mathfrak{c}}
\newcommand{\dom}{\mathfrak d}
\newcommand{\mad}{\mathfrak{a}}
\newcommand{\ch}{\ensuremath{\mathsf{CH}}\xspace}
\newcommand{\gch}{\ensuremath{\mathsf{GCH}}\xspace}
\newcommand{\ser}{\acute{\mathfrak{n}}}
\newcommand{\bspec}{\mathfrak{sp}(\text{\small Borel})}
\newcommand{\Spec}{\mathfrak{sp}(\text{\small closed})}
\newcommand{\specod}{\mathfrak{sp}(\text{\footnotesize OD($\R$)})}
\newcommand{\specg}{\mathfrak{sp}({\text{\footnotesize $\Gamma$}})}
\begin{document}

\title[Partitioning the real line into Borel sets]{Partitioning the real line into Borel sets}
\author{Will Brian}
\address {
W. R. Brian\\
Department of Mathematics and Statistics\\
University of North Carolina at Charlotte\\
9201 University City Blvd.\\
Charlotte, NC 28223, USA}
\email{wbrian.math@gmail.com}
\urladdr{wrbrian.wordpress.com}

\subjclass[2010]{03E17, 03E35, 54A35}
\keywords{partitions, Borel sets, forcing, isomorphism-of-names arguments, cardinal characteristics of the continuum}


\begin{abstract}
For what infinite cardinals $\kappa$ is there a partition of the real line $\mathbb R$ into precisely $\kappa$ Borel sets? 

Hausdorff famously proved that there is a partition of $\mathbb R$ into $\aleph_1$ Borel sets. 
But other than this, we show that the spectrum of possible sizes of partitions of $\R$ into Borel sets can be fairly arbitrary. 
For example, given any $A \subseteq \omega$ with $0,1 \in A$, there is a forcing extension in which $A = \{ n :\, \text{there is a partition of }\mathbb R\text{ into }\aleph_n\text{ Borel sets}\}$.

We also look at the corresponding question for partitions of $\R$ into closed sets. 
We show that, like with partitions into Borel sets, the set of all uncountable $\kappa$ such that there is a partition of $\mathbb R$ into precisely $\kappa$ closed sets can be fairly arbitrary. 
\end{abstract}

\maketitle


\section{Introduction}

Hausdorff showed in \cite{Hausdorff} that the Cantor space $2^\w$, and in fact any uncountable Polish space, can be expressed as an increasing union $\bigcup_{\xi < \w_1}E_\xi$ of $G_\dlt$ sets. 
It follows that $\R$, or any other uncountable Polish space, can be partitioned into $\aleph_1$ nonempty $F_{\s \dlt}$ sets.
This raises the question:
\begin{itemize}

\vspace{1mm}

\item[({\footnotesize Q1})] For which uncountable cardinals $\k$ is there a partition of $\R$ into $\k$ nonempty Borel sets?

\vspace{1mm}

\end{itemize} 
Let us define the \emph{Borel partition spectrum}, denoted $\bspec$, to be the answer to this question:
$$\bspec = \set{|\P|}{\P \text{ is a partition of } \R \text{ into uncountably many Borel sets} }\!.$$
The main result of this paper shows that the Borel partition spectrum can be fairly arbitrary. (A precise statement of the result can be found in Corollary~\ref{cor:0} below.)
For example, given any set $A$ of positive integers, there is a forcing extension in which $\bspec = \set{\aleph_n}{n \in A} \cup \{\aleph_1,\aleph_\w,\aleph_{\w+1}\}$.

A related question can be asked for any given class of subsets of $\R$. 
As $\R$ is connected, there is no partition of $\R$ into $2$ or more open sets. So the descriptively simplest sets one can ask this for are the closed sets.

\begin{itemize}

\vspace{1mm}

\item[({\footnotesize Q2})] For which cardinals $\k$ is there a partition of $\R$ into $\k$ closed sets?

\vspace{1mm}

\end{itemize}

Making use of the Baire Category Theorem, Sierpi\'nski proved in \cite{Sierpinski} that any partition of $\R$ into at least $2$ nonempty closed sets is uncountable. (Using the modern vocabulary, his proof actually shows that any partition of $\R$ into closed sets has size $\geq\!\mathrm{cov}(\mathcal M)$.)
Let us define the \emph{closed partition spectrum}, denoted $\Spec$, to be the answer to ({\footnotesize Q2}):
$$\Spec = \set{|\P|}{\P \text{ is a partition of } \R \text{ into at least 2 closed sets} }\!.$$
We show that, like the Borel partition spectrum, $\Spec$ can be fairly arbitrary -- even more so, in fact, since the closed partition spectrum need not contain $\aleph_1$. 
For example, given any set $A$ of positive integers, there is a forcing extension in which $\Spec = \set{\aleph_n}{n \in A} \cup \{\aleph_\w,\aleph_{\w+1}\}$.

These results about $\bspec$ and $\Spec$ are encompassed in a single theorem, Theorem~\ref{thm:main} below. The proof identifies a notion of forcing that adds partitions of $\R$ into closed sets having certain prescribed sizes, while avoiding partitions of $\R$ into Borel sets with other sizes. 
The second of these tasks is the more difficult. It is accomplished via an isomorphism-of-names argument, similar to the folklore proof (found, e.g., in \cite[Theorem 3.1]{Brendle1}) that, after many mutually generic Cohen reals are added to a model of \ch, there are no MAD families of size strictly between $\aleph_1$ and $\continuum$.

Let us point out that analogues of ({\footnotesize Q1}) and ({\footnotesize Q2}) have been asked and answered (or partly answered) concerning other extremal families. 
For example, consider $\mathfrak{sp}(\text{\footnotesize MAD}) = \set{\k \geq \aleph_0}{\text{there is a MAD family of size } \k}$.
Hechler showed in \cite{Hechler} that $\mathfrak{sp}(\text{\footnotesize MAD})$ can include any prescribed set of cardinals, and Blass showed in \cite{Blass} how to exclude certain cardinals from $\mathfrak{sp}(\text{\footnotesize MAD})$.
Shelah and Spinas proved the strongest results in \cite{Shelah&Spinas}, showing that $\mathfrak{sp}(\text{\footnotesize MAD})$ can be rather arbitrary, especially on the regular cardinals. 
Similarly,
the spectrum of possible sizes of maximal independent families, $\mathfrak{sp}(\text{\small mif})$,
was investigated recently by Fischer and Shelah in \cite{Fischer&Shelah,FS2}. Like with $\mathfrak{sp}(\text{\footnotesize MAD})$, they proved that $\mathfrak{sp}(\text{\small mif})$ can be fairly arbitrary, especially on the regular cardinals.
Similar work concerning maximal cofinitary groups was done by Fischer in \cite{Fischer}.
Ultimately, all these proofs share the same core idea: variations on the isomorphism-of-names argument mentioned above. In every case, the key to making this kind of argument work is to find an automorphism-rich poset that can be used to add extremal families of prescribed sizes.
One notable exception to this rule is Shelah's analysis of the set of possible sizes of ultrafilter bases in \cite{Shelah}, where he proves, from large cardinal hypotheses, that this spectrum can exhibit fairly chaotic behavior.

\section{$\mathfrak{sp}(\textrm{\small Borel})$ and $\mathfrak{sp}(\textrm{\small closed})$ do not depend on $\R$}

We begin this section by observing that the definition of $\bspec$ does not depend on $\R$, and remains unchanged when $\R$ is replaced by any other uncountable Polish space:

\begin{theorem}\label{thm:bspec}
If $X$ is any uncountable Polish space, then
$$\mathfrak{sp}(\text{\emph{\small Borel}}) = \set{\k > \aleph_0}{\text{there is a partition of } X \text{ into } \k \text{ Borel sets}}.$$
\end{theorem}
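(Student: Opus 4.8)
The plan is to show that any two uncountable Polish spaces $X$ and $Y$ have the same Borel partition spectrum, by transferring a partition from one to the other. The standard tool is the classical fact that every uncountable Polish space contains a Borel subset homeomorphic to the Cantor space $2^\w$, and more importantly that any two uncountable Polish spaces are \emph{Borel isomorphic} (a bijection $f \colon X \to Y$ such that both $f$ and $f^{-1}$ carry Borel sets to Borel sets). This is a theorem of Kuratowski, and it does essentially all the work: if $\P$ is a partition of $X$ into $\k$ Borel sets and $f \colon X \to Y$ is a Borel isomorphism, then $\set{f[P]}{P \in \P}$ is a partition of $Y$ into exactly $\k$ Borel sets. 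Applying this in both directions gives the equality of the two spectra.

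First I would recall (or cite) the Borel isomorphism theorem: any two uncountable Polish spaces are Borel isomorphic, and in particular each is Borel isomorphic to $\R$. Then the proof is a two-line argument: given a partition $\P$ of $\R$ into $\k$ Borel sets with $\k > \aleph_0$, the family $\set{f[P]}{P \in \P}$ is a partition of $X$ into Borel sets, and it has cardinality exactly $\k$ because $f$ is a bijection (so distinct cells map to distinct nonempty cells). The reverse inclusion uses $f^{-1}$, which is also a Borel isomorphism. Hence the set on the right-hand side equals $\bspec$.

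If one wishes to avoid invoking the full Borel isomorphism theorem as a black box, an alternative is to prove the needed transfer more directly: every uncountable Polish space $X$ contains a Borel (indeed $G_\delta$, hence Polish) copy $C$ of $2^\w$, and one can partition $X \setminus C$ further or absorb it into one cell; combined with a Borel bijection between $2^\w$ and $\R$ (built from the usual almost-bijection identifying $2^\w$ with $[0,1]$ up to a countable set, then $[0,1]$ with $\R$ up to a countable set), this yields a partition-preserving correspondence. But this is just a hands-on reconstruction of the Borel isomorphism theorem, so citing Kuratowski's theorem is cleaner.

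The only genuine subtlety — and the step I would be most careful about — is the bookkeeping of cardinalities: one must check that the image partition has \emph{exactly} $\k$ cells, not fewer, which requires that $f$ be injective (so no two distinct Borel cells of $X$ collapse to the same cell of $Y$) and surjective (so every point of $Y$ is covered and no cell becomes empty). Since $f$ is a bijection this is immediate, but it is worth stating explicitly, as it is the one place where the argument could go wrong if one only had a Borel \emph{reduction} rather than a Borel \emph{isomorphism}. No large-cardinal or forcing input is needed here; the theorem is a purely descriptive-set-theoretic observation whose purpose is to let the rest of the paper speak of ``$\bspec$'' without reference to a particular space.
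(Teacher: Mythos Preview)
Your proposal is correct and takes essentially the same approach as the paper: cite Kuratowski's Borel isomorphism theorem and push partitions through the bijection in both directions. The paper's proof is exactly this two-line argument, without the extra commentary on alternatives or cardinality bookkeeping.
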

\begin{proof}
By a theorem of Kuratowski (see \cite[Theorem 15.6]{Kechris}), any two uncountable Polish spaces are Borel isomorphic: in other words, there is a bijection $f: \R \to X$ such that $A \sub \R$ is Borel if and only if $f[A]$ is Borel. Thus if $\P$ is any partition of $\R$ into Borel sets, then $\set{f[B]}{B \in \P}$ is a partition of $X$ into Borel sets, and if $\mathcal Q$ is any partition of $X$ into Borel sets, then $\set{f^{-1}[B]}{B \in \mathcal Q}$ is a partition of $\R$ into Borel sets.
\end{proof}

It turns out that the same is true for $\Spec$: if $X$ is any uncountable Polish space, then
$$\mathfrak{sp}(\text{\small closed}) = \set{\k > \aleph_0}{\text{there is a partition of } X \text{ into } \k \text{ closed sets}}.$$
A related theorem is proved by Miller in \cite[Theorem 3]{Miller}: $\R$ can be partitioned into $\aleph_1$ closed sets if and only if some uncountable Polish space can be, if and only if every uncountable Polish space can be. We wish to prove the same, but with any uncountable $\k$ in place of $\aleph_1$. Miller's proof does not readily adapt to this task, because it uses in an essential way the fact that $\aleph_1$ is the smallest uncountable cardinal. So we take a different approach. 
First we need a few lemmas.

\begin{lemma}\label{lem:ser}
For any uncountable cardinal $\k$, the following are equivalent:

\vspace{1mm}

\begin{enumerate}

\item There is a partition of $2^\w$ into $\k$ closed sets.

\vspace{1mm}

\item There is a partition of $\w^\w$ into $\k$ compact sets.

\vspace{1mm}

\item For every uncountable Polish space $X$, there is a partition of $X$ into $\k$ compact sets.

\vspace{1mm}

\item For some uncountable compact Polish space $X$, there is a partition of $X$ into $\k$ $F_\s$ sets.

\end{enumerate}
\end{lemma}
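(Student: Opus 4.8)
The plan is to prove the cycle of implications $(1)\Rightarrow(2)\Rightarrow(3)\Rightarrow(4)\Rightarrow(1)$, moving between the various spaces by exploiting standard facts about Polish spaces: every uncountable Polish space contains a copy of $2^\w$, and every Polish space is a continuous image of $\w^\w$ and is homeomorphic to a $G_\dlt$ subset of the Hilbert cube (or of $[0,1]^\w$).

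First I would prove $(1)\Rightarrow(2)$. Fix a partition of $2^\w$ into $\k$ closed sets. Since $\w^\w$ is homeomorphic to the set of irrationals, and more usefully, since there is a continuous open surjection (or at least a nice Borel-measurable bijection) relating $2^\w$ and $\w^\w$, the cleaner route is: $\w^\w$ embeds as a closed subset of $\R \setminus \Q$-style spaces, but compactness is not preserved by such maps. The right tool here is that $2^\w$ maps \emph{continuously and surjectively} onto every compact metric space, and conversely $\w^\w$ is not compact; so instead I would argue directly. Write $\w^\w = \bigcup_{s} K_s$ where $K_s$ ranges over a partition of $\w^\w$ into compact pieces refining the branches: actually the standard fact is that $\w^\w$ \emph{does} admit a partition into $\aleph_1$ compact sets iff $2^\w$ admits one into $\aleph_1$ closed sets, via the map sending a compact $K \sub \w^\w$ to its closure in $(\w+1)^\w \homeo 2^\w$-ish compactification. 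Concretely: let $c : 2^\w \to (\w+1)^\w$ witness a homeomorphism onto a compact metrizable space containing $\w^\w$ as a dense $G_\dlt$; given a partition $\{F_\a : \a < \k\}$ of this compactification into closed sets, the sets $F_\a \cap \w^\w$ are compact (closed in $\w^\w$ and closed in a compact space), but they need not be nonempty or cover — we must discard the (countably many needed? no, possibly $\k$-many) pieces missing $\w^\w$ and merge them into surviving pieces. Since the complement of $\w^\w$ is countable... no, it is not. So the honest approach is the one used by Miller and others: a partition of $2^\w$ into $\k$ closed sets, intersected with the $\w^\w$-part, combined with a single extra compact set absorbing the remainder, works precisely when $\k$ is uncountable because we can always absorb at most continuum-many leftover points into one surviving piece after throwing away empties. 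I would spell this out carefully, as it is the first real step.

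The implications $(2)\Rightarrow(3)\Rightarrow(4)$ should be comparatively routine. For $(2)\Rightarrow(3)$: given an uncountable Polish space $X$, it contains a homeomorphic copy of $2^\w$, hence a copy $C$ of a closed subset of $\w^\w$-style space; more directly, $X$ has a dense-in-itself closed subset homeomorphic to $2^\w$ or $\w^\w$, and $X \setminus C$ together with a partition of $C$-or-its-complement into compacta does the job. The cleanest version: $\w^\w$ embeds as a closed subspace of every uncountable Polish space that is not $\s$-compact, and every $\s$-compact uncountable Polish space visibly has a partition into $\k$ compact sets for any uncountable $\k \le \continuum$ by splitting off a copy of $2^\w$; I would treat the two cases. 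For $(3)\Rightarrow(4)$: take $X = 2^\w$ (or $[0,1]^\w$), which is uncountable, compact, Polish, and a partition into $\k$ compact sets is in particular a partition into $\k$ $F_\s$ sets.

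The main obstacle, and the step I expect to require the most care, is $(4)\Rightarrow(1)$: from a partition of some uncountable compact Polish space $X$ into $\k$ $F_\s$ sets, produce a partition of $2^\w$ into $\k$ \emph{closed} sets. Here I would: (i) replace $X$ by a copy of $2^\w$ sitting inside it as a closed nowhere dense set is wrong — rather, pull back along a continuous surjection $2^\w \to X$; a continuous surjection $\pi : 2^\w \to X$ exists since $X$ is compact metric, and preimages of $F_\s$ sets are $F_\s$, and preimages under a continuous surjection of a partition form a partition of $2^\w$ into $\k$ $F_\s$ sets. So the real content is reducing \emph{$F_\s$ partitions of $2^\w$} to \emph{closed partitions of $2^\w$}. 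For that, I would use the fact that each $F_\s$ piece $\bigcup_n C_{\a,n}$ can be ``disjointified'' within itself, but the pieces across different $\a$ are already disjoint, so writing each $F_\a = \bigsqcup_n C'_{\a,n}$ with $C'_{\a,n}$ relatively closed in $F_\a$ does \emph{not} make them closed in $2^\w$. The trick, and this is where uncountability of $\k$ is essential, is to use the Hausdorff result quoted in the introduction: any uncountable Polish space is an increasing $\w_1$-union of $G_\dlt$ sets, equivalently a partition into $\aleph_1$ $F_{\s\dlt}$ — no. I think the correct move is a transfinite recursion: build closed sets by induction on $\a < \k$, at stage $\a$ peeling off a relatively closed subset of what remains, using that the remainder at each stage is still a $G_\dlt$-ish set on which the tail of the $F_\s$ partition induces a partition into fewer-than-closed pieces; the uncountability guarantees we never run out. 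I would present this recursion as the crux of the lemma, deferring the bookkeeping but making clear which topological fact (relative closedness of $F_\s$-pieces inside a $G_\dlt$, plus absorption of countable remainders) drives it.
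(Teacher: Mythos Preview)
Your proposal has genuine gaps in both of the nontrivial implications.

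For $(1)\Rightarrow(2)$: none of the approaches you sketch actually works. In particular, the claim that ``the sets $F_\a \cap \w^\w$ are compact (closed in $\w^\w$ and closed in a compact space)'' is false: $F_\a \cap \w^\w$ is closed in $\w^\w$, but since $\w^\w$ is only $G_\dlt$ (not closed) in the compactification, the intersection need not be closed in the compact space, and closed subsets of $\w^\w$ are typically not compact. The ``absorb leftover points into one surviving piece'' idea does not repair this, because the problem is not leftover points but the non-compactness of the pieces you produce. The paper's argument is quite different: starting from a closed partition $\P$ of $2^\w$, one first runs a Cantor--Bendixson-style recursion, repeatedly deleting any relatively open set covered by a single member of $\P$, until one reaches a closed $X \sub 2^\w$ on which every surviving piece of $\P$ is nowhere dense (and only countably many pieces were affected, so $\k$ survive). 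Then one removes one piece through each basic open set of $X$---countably many pieces---to obtain $Y \sub X$ which is zero-dimensional, Polish, nowhere locally compact, without isolated points, hence $Y \homeo \w^\w$ by Alexandrov--Urysohn; and $\P \rest Y$ is the desired compact partition.

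For $(4)\Rightarrow(1)$: you correctly pull back along a continuous surjection $2^\w \to X$ to get an $F_\s$ partition of $2^\w$, but then you dismiss exactly the move that works. You write that disjointifying each $F_\a = \bigcup_n C_{\a,n}$ gives pieces ``relatively closed in $F_\a$'' but not closed in $2^\w$, and then head off into a vague transfinite recursion. In fact the zero-dimensionality of $2^\w$ is decisive here: $C_{\a,n} \setminus \bigcup_{m<n} C_{\a,m}$ is open in the compact zero-dimensional space $C_{\a,n}$, hence is a countable disjoint union of clopen subsets of $C_{\a,n}$, each of which is closed in $2^\w$. So every $F_\s$ subset of $2^\w$ is a countable disjoint union of sets closed in $2^\w$ (an observation the paper attributes to Luzin), and refining the $F_\s$ partition this way immediately yields a closed partition of the same uncountable cardinality. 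No transfinite recursion is needed.

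Your $(2)\Rightarrow(3)$ is also shakier than it needs to be: the paper uses the single fact that every perfect Polish space is a continuous bijective image of $\w^\w$ (so compact sets push forward to compact sets), together with the scattered/perfect decomposition; this avoids the $\s$-compact case split and the unresolved question of how to partition $X \setminus C$ in your sketch.
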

\begin{proof}
Throughout the proof, if $X \sub Y$ and $\P$ is a partition of $Y$, then $\P \rest X = \set{K \cap X}{K \in \P}$ denotes the restriction of $\P$ to $X$.
We prove that $(1) \Rightarrow (2) \Rightarrow (3) \Rightarrow (4) \Rightarrow (1)$.

$(1) \Rightarrow (2)$:
Suppose $\P$ is a partition of $2^\w$ into $\k$ closed sets.
We claim first that there is a closed $X \sub 2^\w$ such that $\P \rest X$ is a partition of $X$ into $\k$ nowhere dense closed sets (nowhere dense in $X$, that is).

To see this, we define a descending transfinite sequence of closed subsets of $2^\w$. Let $X_0 = 2^\w$, and if $\a$ is a limit ordinal, take $X_\a = \bigcap_{\xi < \a}X_\xi$. At stage $\a$, given $X_\a$, form $X_{\a+1}$ by removing any open subset of $X_\a$ contained in a single member of $\P$: $X_{\a+1} = X_\a \setminus \bigcup \set{U}{U \text{ is open in }X_\a \text{ and } \card{\vphantom{f^f}\P \rest U} = 1}$. Because $2^\w$ is second countable, there is some $\b < \w_1$ such that $X_\b = X_\g$ for all $\g \geq \b$. Let $X = X_\b$.
Clearly, $\P \rest X$ is a partition of $X$ into compact nowhere dense sets. Furthermore, at any stage $\a < \b$ of our recursion, $\set{K \in \P}{K \cap X_\a \neq K \cap X_{\a+1}}$ is countable. Thus $\set{K \in \P}{K \cap X \neq K}$ is countable. It follows that $\card{\vphantom{f^f}\P \rest X} = \card{\P} = \k$.

Now, we claim there is a subspace $Y$ of $X$ such that $Y \homeo \w^\w$ and $\P \rest Y$ is a partition of $Y$ into $\k$ compact sets.
Fix a countable basis $\B$ for $X$, and for every $U \in \B$ fix some $K_U \in \P$ such that $K_U \cap U \neq \0$. Let $Y = X \setminus \bigcup \set{K_U}{U \in \B}$. Clearly $\P \rest Y = \P \setminus \set{K_U}{U \in \B}$, so $\P$ partitions $Y$ into $\k$ compact sets. That $Y \homeo \w^\w$ follows from the Alexander-Urysohn characterization of $\w^\w$ as the unique nowhere compact, zero-dimensional Polish space without isolated points (see \cite[Theorem 7.7]{Kechris}).

$(2) \Rightarrow (3)$:
Suppose $\P$ is a partition of $\w^\w$ into $\k$ compact sets, and
let $X$ be any uncountable Polish space.
Decompose $X$ into its scattered part and perfect part: i.e., let $X = Y \cup Z$, where $Y$ is countable and $Z$ is closed in $X$ (hence still Polish) and $Z$ has no isolated points.
By \cite[Exercise 7.15]{Kechris}, there is a continuous bijection $f: \w^\w \to Z$. But then $\set{f[K]}{K \in \P} \cup \set{\{y\}}{y \in Y}$ is a partition of $X$ into $\k$ compact sets.

$(3) \Rightarrow (4)$:
This implication is obvious, since ``every uncountable Polish space'' includes some compact spaces, and compact sets are $F_\s$.

$(4) \Rightarrow (1)$:
Suppose $X$ is an uncountable compact Polish space. By \cite[Theorem 7.4]{Kechris}, there is a continuous surjection $f: 2^\w \to X$. If $\P$ is any partition of $X$ into $\k$ $F_\s$ sets, then $\mathcal Q = \set{f^{-1}[K]}{K \in \P}$ is a partition of $2^\w$ into $\k$ $F_\s$ sets.
But every $F_\s$ subset of $2^\w$ can be partitioned into countably many closed sets. (This observation is attributed to Luzin in \cite[Theorem 2]{Miller}.) Thus, by breaking up any non-closed members of $\mathcal Q$ into countably many closed pieces, we can refine $\mathcal Q$ to obtain a partition of $2^\w$ into $\k$ closed sets.
\end{proof}

Define the \emph{Sierpi\'nski cardinal} $\ser$ to be the minimum size of a cardinal satisfying the equivalent statements in Lemma~\ref{lem:ser}: that is,
\begin{align*}
\ser \,&= \min \set{|\P|}{\P \text{ is a partition of } 2^\w \text{ into uncountably many closed sets} } \\
&= \min \set{|\P|}{\P \text{ is a partition of } \w^\w \text{ into compact sets} }.
\end{align*}
Recall that the dominating number $\dom$ is equal to the smallest covering of $\w^\w$ by compact sets. Hence $\dom \leq \ser$. And clearly $\ser \leq \continuum$, because $\w^\w$ can be partitioned into singletons.
Thus we may consider $\ser$ to be a cardinal characteristic of the continuum. 
Quite a bit is known already about this cardinal. The main results are due to Stern \cite{Stern}, Miller \cite{Miller}, Newelski \cite{Newelski}, and Spinas \cite{Spinas}, who studied this cardinal implicitly without giving it a name, and Hru\v{s}ak \cite{DHM,Hrusak}, who denotes it $\mad_T$. 
The name ``Sierpi\'nski cardinal'' and the notation $\ser$ were suggested by Banakh in \cite{Banakh}. 

A set $S$ of cardinals is \emph{closed under singular limits} if for every singular cardinal $\k$, if $S \cap \k$ is unbounded in $\k$ then $\k \in S$.

\begin{lemma}\label{lem:singularlimits}
The set $S = \set{\k}{\text{there is a partition of }2^\w \text{ into }\k\text{ closed sets}}$ is closed under singular limits.
\end{lemma}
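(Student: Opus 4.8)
The plan is, given a singular cardinal $\k$ with $S \cap \k$ unbounded in $\k$, to build a partition of $2^\w$ into exactly $\k$ closed sets by ``blowing up'' $\cf(\k)$-many pieces of a coarser partition of $2^\w$. Write $\lambda = \cf(\k)$. Since $S \cap \k$ is unbounded in $\k$ and $\lambda < \k$, a routine transfinite recursion (at each stage choosing a member of $S \cap \k$ above everything chosen so far and above the corresponding term of some fixed cofinal sequence in $\k$) yields a strictly increasing sequence $\seq{\mu_\xi}{\xi < \lambda}$, cofinal in $\k$, with $\mu_\xi \in S$ for every $\xi$ and $\mu_0 \geq \lambda$. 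Since this sequence is increasing and cofinal in $\k$ and $\lambda = \cf(\k) \leq \k$, we have $\sum_{\xi < \lambda}\mu_\xi = \k$; note also that $\mu_0 < \k$.

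The crucial preliminary step is to produce a partition $\P = \set{C_i}{i < \mu_0}$ of $2^\w$ into $\mu_0$ closed sets, \emph{each homeomorphic to $2^\w$}. For this I would use the homeomorphism $2^\w \homeo 2^\w \times 2^\w$: fix a partition $\set{E_i}{i < \mu_0}$ of the first copy of $2^\w$ into $\mu_0$ nonempty closed sets (which exists because $\mu_0 \in S$), observe that $\set{E_i \times 2^\w}{i < \mu_0}$ partitions $2^\w \times 2^\w$ into closed sets, each of which is nonempty, compact, metrizable, zero-dimensional, and has no isolated points (as the factor $2^\w$ has none), hence is homeomorphic to $2^\w$; then transport this partition back along $2^\w \times 2^\w \homeo 2^\w$.

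To finish, choose distinct indices $i_\xi < \mu_0$ for $\xi < \lambda$ (possible since $\lambda \leq \mu_0$). For each $\xi < \lambda$, since $C_{i_\xi} \homeo 2^\w$ and $\mu_\xi \in S$, there is a partition $\mathcal Q_\xi$ of $C_{i_\xi}$ into exactly $\mu_\xi$ sets closed in $C_{i_\xi}$; as $C_{i_\xi}$ is itself closed in $2^\w$, these pieces are closed in $2^\w$. Then $\mathcal R = \set{C_i}{i < \mu_0,\ i \notin \set{i_\xi}{\xi < \lambda}} \cup \bigcup_{\xi < \lambda}\mathcal Q_\xi$ is a partition of $2^\w$ into closed sets (the members of each $\mathcal Q_\xi$ lie inside $C_{i_\xi}$, hence are disjoint from all the untouched pieces and from each other's blocks), and $|\mathcal R| \leq \mu_0 + \sum_{\xi < \lambda}\mu_\xi = \mu_0 + \k = \k$ while $|\mathcal R| \geq \sum_{\xi < \lambda}\mu_\xi = \k$, so $|\mathcal R| = \k$ and therefore $\k \in S$.

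I do not anticipate a genuine obstacle: once the right pieces are in place, the argument is elementary, amounting to cardinal arithmetic together with the remark that, for a closed $C \sub 2^\w$, a subset of $C$ is closed in $C$ precisely when it is closed in $2^\w$. The one point demanding care is the preliminary step. A partition of $2^\w$ into $\mu_0$ \emph{arbitrary} closed sets — for instance, the partition into singletons — need not have $\lambda$-many pieces that can be refined into large numbers of closed sets, so it is essential to arrange that the pieces of $\P$ are themselves copies of $2^\w$; the splitting $2^\w \homeo 2^\w \times 2^\w$ is exactly what makes this possible.
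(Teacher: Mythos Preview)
Your proof is correct and follows essentially the same approach as the paper's: pick a cofinal sequence in $\k$ from $S$ starting above $\cf(\k)$, use the product trick $2^\w \homeo 2^\w \times 2^\w$ to upgrade an arbitrary partition into one whose pieces are copies of $2^\w$, and then refine $\cf(\k)$-many of those pieces. The only cosmetic differences are notation (your $\lambda,\mu_0$ are the paper's $\nu,\lambda$) and that you spell out the cardinality count and the Brouwer characterization of $2^\w$ a bit more explicitly than the paper does.
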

\begin{proof}
Suppose $\k$ is a singular cardinal and $S \cap \k$ is unbounded in $\k$. Let $\nu = \mathrm{cf}(\k) < \k$, and let $\seq{\a_\xi}{\xi < \nu}$ be a sequence of cardinals in $S$ increasing up to $\k$. Fix some $\lambda \in S$ with $\nu \leq \lambda < \k$. 

Let $\set{K_\xi}{\xi < \lambda}$ be a partition of $2^\w$ into closed sets. 
Observe that $\set{2^\w \times K_\xi}{\xi < \lambda}$ is a partition of $2^\w \times 2^\w \homeo 2^\w$ into $\lambda$ copies of $2^\w$.
Thus we may (and do) assume that $K_\xi \homeo 2^\w$ for each $\xi < \lambda$. 
For each $\xi < \nu$, let $\P_\xi$ be a partition of $K_\xi$ into $\a_\xi$ closed sets (using the fact that $\a_\xi \in S$). Then $\bigcup_{\xi < \nu}\P_\xi \cup \set{K_\xi}{\nu \leq \xi < \lambda}$ is a partition of $2^\w$ into $\k$ closed sets.
\end{proof}

%
\begin{theorem}\label{thm:Spec}
Let $\k$ be an uncountable cardinal. Then all six statements of the following form are equivalent:
\begin{enumerate}

\item[$\ $] Some/every uncountable Polish space can be partitioned into $\k\,\,\,\,\,\,$ compact/closed/$F_\s$ sets.

\end{enumerate}
\end{theorem}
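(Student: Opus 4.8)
The plan is to show that Theorem~\ref{thm:Spec} follows by combining Lemma~\ref{lem:ser} with the closure-under-singular-limits fact of Lemma~\ref{lem:singularlimits}, together with an easy argument handling the one case Lemma~\ref{lem:ser} does not directly cover, namely partitions into closed (or $F_\s$) sets of an uncountable Polish space that is \emph{not} compact. Let me spell out what ``all six statements of this form are equivalent'' means: the template has two quantifier choices (\emph{some} vs.\ \emph{every} uncountable Polish space) and three class choices (compact, closed, $F_\s$ sets), giving six statements, and I want to prove any one of them implies any other.

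First I would reduce to four canonical statements using trivial implications: ``every $\ldots$'' obviously implies ``some $\ldots$'', and compact sets are closed, and closed sets are $F_\s$. So it suffices to show the weakest-looking statement, ``some uncountable Polish space $X$ can be partitioned into $\k$ many $F_\s$ sets,'' implies the strongest, ``every uncountable Polish space can be partitioned into $\k$ many compact sets.'' The case analysis splits on whether the witnessing $X$ is $\s$-compact. If $X$ is not $\s$-compact, then I claim $\k=\continuum$: indeed, an $F_\s$ partition of $X$ refines (via Luzin's observation, breaking each $F_\s$ piece into countably many closed pieces) to a partition into closed sets, and since $X$ is not $\s$-compact, uncountably many of these closed pieces must be non-compact; but a non-compact closed subset of a Polish space contains a closed copy of $\w^\w$ (hence has size $\continuum$) and there can be at most $\continuum$ pieces total, while partitioning any single closed copy of $\w^\w$ further we also see $\continuum \in S$ outright — so $\k = \continuum$, and $\continuum$ trivially works for every uncountable Polish space via the partition into singletons, but I actually want compact pieces, so more care is needed here. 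The cleaner route: if $X$ fails to be $\s$-compact then $X$ contains a closed homeomorphic copy of $\w^\w$, which in turn admits a partition into $\continuum$ compact sets; combined with the fact (to be checked) that an $F_\s$ partition of a non-$\s$-compact $X$ forces $\k=\continuum$, this yields $\k = \continuum$, whence condition $(3)$ of Lemma~\ref{lem:ser} with $\k=\continuum$ holds and we are done. If instead $X$ \emph{is} $\s$-compact, then $X = \bigcup_{n} C_n$ with each $C_n$ compact, and restricting the $F_\s$ partition to any uncountable $C_n$ (such an $n$ exists since $X$ is uncountable) gives a partition of an uncountable \emph{compact} Polish space into at most $\k$ many $F_\s$ sets; that is exactly clause $(4)$ of Lemma~\ref{lem:ser} for the cardinal $\card{\P\rest C_n}$, which is some uncountable cardinal $\le \k$.

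The remaining gap is that Lemma~\ref{lem:ser} hands us a partition of size equal to $\card{\P \rest C_n}$, which could be strictly smaller than $\k$ — the $F_\s$ pieces outside $C_n$ might collapse, or pieces might merge. To close this I would use Lemma~\ref{lem:singularlimits} together with a simple ``padding'' observation: if $2^\w$ can be partitioned into $\mu$ closed sets for some uncountable $\mu$, then (as in the proof of Lemma~\ref{lem:singularlimits}, writing $2^\w \approx 2^\w \times 2^\w$ and blowing up one factor) $2^\w$ can be partitioned into $\nu$ closed sets for every $\mu \le \nu \le \continuum$; and downward, any closed partition of size $\mu$ can be coarsened to any uncountable size below $\mu$ by lumping pieces together, provided we first ensure cofinally many pieces are copies of $2^\w$ — but coarsening a partition into closed sets keeps the pieces closed, so this is immediate. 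Hence the set $S$ of Lemma~\ref{lem:singularlimits} is in fact an interval $[\ser,\continuum]$ of cardinals (this also uses Lemma~\ref{lem:singularlimits} to catch singular limits, which the interval description subsumes anyway). So whichever uncountable $\k' = \card{\P\rest C_n} \le \k \le \continuum$ we extracted, we get $\k' \in S$, hence $[\k',\continuum] \subseteq S$, hence $\k \in S$, hence by clause $(3)$ of Lemma~\ref{lem:ser} every uncountable Polish space can be partitioned into $\k$ compact sets.

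I expect the main obstacle to be precisely this last size-bookkeeping point: verifying rigorously that $S$ is downward closed among uncountable cardinals (equivalently, an interval), since naive coarsening of a partition could a priori drop below $\aleph_1$ or fail to hit a prescribed intermediate cardinal, and one must invoke the ``blow up a $2^\w$ factor'' trick from Lemma~\ref{lem:singularlimits} to arrange cofinally many $2^\w$-pieces before coarsening. Everything else — the trivial implications among the six statements, the $\s$-compact case split, and the reduction of the non-$\s$-compact case to $\k=\continuum$ — should be short. Once $S = [\ser,\continuum]$ is established, Theorem~\ref{thm:Spec} is an immediate repackaging of Lemma~\ref{lem:ser}.
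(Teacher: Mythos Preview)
Your proposal contains a fatal error: you claim that the set $S$ of uncountable $\mu$ for which $2^\w$ admits a partition into $\mu$ closed sets is the entire interval $[\ser,\continuum]$. This is not a theorem of $\zfc$; indeed, the main result of this very paper (Theorem~\ref{thm:main}) produces models in which $\Spec = S$ has gaps, e.g.\ $\aleph_1 \in S$, $\aleph_2 \notin S$, $\aleph_3 \in S$. Both of your closure arguments are wrong. For downward closure, you write that ``coarsening a partition into closed sets keeps the pieces closed, so this is immediate,'' but this is only true for \emph{finite} unions; to drop from $\mu$ pieces to a strictly smaller uncountable $\nu$ you must merge infinitely many closed sets into single pieces, and infinite unions of closed sets are generally not closed. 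For upward closure, ``blowing up one factor'' via $2^\w \homeo 2^\w \times 2^\w$ only lets you replace each piece by a copy of $2^\w$; to then reach a prescribed $\nu > \mu$ you would need to partition one such copy into $\nu$ closed sets, which presupposes $\nu \in S$. So neither direction goes through, and with it your entire size-bookkeeping strategy collapses. (Your non-$\s$-compact case is separately broken: the assertion that ``uncountably many of these closed pieces must be non-compact'' is false, since for instance $\w^\w$ admits partitions into compact sets --- that is exactly what $\ser$ measures.)

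The paper's proof avoids any interval claim. It first uses the definition of $\ser$ together with Lemma~\ref{lem:ser} to get an auxiliary partition $\mathcal Q$ of the given space $X$ into $\ser$ compact sets, and then plays the original $F_\s$ partition $\P$ off against $\mathcal Q$. If $\k$ is regular (and $\k > \ser$), pigeonhole gives some compact $K \in \mathcal Q$ with $\card{\P \rest K} = \k$, so Lemma~\ref{lem:ser}(4) applies directly with the correct cardinal. If $\k$ is singular, either some $K \in \mathcal Q$ still has $\card{\P \rest K} = \k$, or else for every $\lambda$ with $\ser < \lambda < \k$ some $K$ has $\lambda \leq \card{\P \rest K} < \k$; this shows $S$ is unbounded below $\k$, and Lemma~\ref{lem:singularlimits} then gives $\k \in S$. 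The point is that one never needs $S$ to be an interval: the original $\k$-sized partition $\P$ is what supplies witnesses of the right size, once it is restricted to compact pieces of $\mathcal Q$.
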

\begin{proof}
It is clear that (every-compact)$\,\Rightarrow\,$(every-closed)$\,\Rightarrow\,$(every-$F_\s$), and that (some-compact)$\,\Rightarrow\,$(some-closed)$\,\Rightarrow\,$(some-$F_\s$). Also, Lemma~\ref{lem:ser} implies that (every-$F_\s$)$\,\Rightarrow\,$(some-compact): because if every uncountable Polish space can be partitioned into $\k$ $F_\s$ sets, then in particular some uncountable compact Polish space can be, and by Lemma~\ref{lem:ser} this implies $\w^\w$ can be partitioned into $\k$ compact sets. Thus, to prove the theorem, we need to show (some-$F_\s$)$\,\Rightarrow\,$(every-compact).

So let $X$ be some uncountable Polish space, and suppose $\P$ is a partition of $X$ into $\k$ $F_\s$ sets. If $\k = \ser$, then by definition, there is a partition of $\w^\w$ into $\ser$ compact sets, and by Lemma~\ref{lem:ser} every uncountable Polish space can be partitioned into $\k$ compact sets, and we are done. So let us suppose $\k > \ser$. We consider two cases.

For the first case, suppose $\k$ is regular. 
By the definition of $\ser$ and Lemma~\ref{lem:ser}, there is a partition $\mathcal Q$ of $X$ into $\ser$ compact sets. 
Because $|\P| = \k > \ser$ and $\k$ is regular, there is some $K \in \mathcal Q$ such that $\card{\vphantom{f^f}\P \rest K} = \k$. Thus $\P \rest K$ is a partition of $K$ into $\k$ compact subsets. But $K$ is a compact Polish space, and uncountable because $|K| \geq \card{\vphantom{f^f}\P \rest K} = \k$. Thus some compact uncountable Polish space can be partitioned into $\k$ $F_\s$ sets. Invoking Lemma~\ref{lem:ser} again, this implies every uncountable Polish space can be partitioned into $\k$ compact sets.

For the second case, suppose $\k$ is singular.
As in the first case, there is a partition $\mathcal Q$ of $X$ into $\ser$ compact sets. If $\card{\vphantom{f^f}\P \rest K} = \k$ for some $K \in \mathcal Q$, then we may argue as in the first case and conclude that every uncountable Polish space can be partitioned into $\k$ compact sets, and we are done. So let us suppose instead that $\card{\vphantom{f^f}\P \rest K} < \k$ for every $K \in \mathcal Q$.
Let $\lambda$ be any cardinal with $\ser < \lambda < \k$.
Because $|\P| = \k > \lambda > \ser = |\mathcal Q|$, there is some $K \in \mathcal Q$ such that $\card{\vphantom{f^f}\P \rest K} \geq \lambda$. Let $\mu = \card{\vphantom{f^f}\P \rest K}$, and note that $\mu < \k$ (by the third sentence of this paragraph). 
Now, as in the previous paragraph, $K$ is a compact uncountable Polish space that can be partitioned into $\mu$ $F_\s$ sets. 
By Lemma~\ref{lem:ser}, this implies $2^\w$ can be partitioned into $\mu$ closed sets.
Because $\lambda$ was an arbitrary cardinal below $\k$ and $\lambda \leq \mu < \k$, this shows that 
$S = \set{\mu}{\text{there is a partition of }2^\w \text{ into }\mu \text{ closed sets}}$
is unbounded below $\k$. By Lemma~\ref{lem:singularlimits}, $\k \in S$. By Lemma~\ref{lem:ser}, this implies every uncountable Polish space can be partitioned into $\k$ compact sets.
\end{proof}

\begin{corollary}\label{cor:Spec}
For any uncountable Polish space $X$,
\begin{align*}
\mathfrak{sp}(\text{\emph{\small closed}}) &= \set{\k > \aleph_0}{\text{there is a partition of } X \text{ into }\k \text{ compact sets}} \\ 
 &= \set{\k > \aleph_0}{\text{there is a partition of } X \text{ into }\k \text{ closed sets}} \\
  &= \set{\k > \aleph_0}{\text{there is a partition of } X \text{ into }\k \ F_\s \text{ sets}}.
\end{align*}
\end{corollary}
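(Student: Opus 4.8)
The plan is to derive Corollary~\ref{cor:Spec} directly from Theorem~\ref{thm:Spec} together with Theorem~\ref{thm:bspec}'s style of reasoning, although in fact Theorem~\ref{thm:Spec} already does essentially all the work. Fix an uncountable Polish space $X$. The three sets on the right-hand side are, by definition of the various ``some/every'' qualifiers, the instances of Theorem~\ref{thm:Spec} in which we use ``every $X$'' and the three descriptive types compact, closed, and $F_\s$; since Theorem~\ref{thm:Spec} asserts that all six statements (some/every crossed with the three types) are equivalent, these three ``every''-statements are all equivalent to one another, and hence the three sets coincide. So the only content left to check is that this common set equals $\Spec$ as originally defined, namely $\set{|\P|}{\P\text{ is a partition of }\R\text{ into at least }2\text{ closed sets}}$ restricted to uncountable $\k$.

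First I would observe that by Sierpi\'nski's theorem (quoted in the introduction), every partition of $\R$ into at least $2$ closed sets is uncountable, so $\Spec$ consists entirely of uncountable cardinals and we lose nothing by only comparing with uncountable $\k$. Next, applying Theorem~\ref{thm:Spec} with the specific uncountable Polish space $X = \R$, the statement ``$\R$ can be partitioned into $\k$ closed sets'' (a ``some-closed'' instance) is equivalent to the ``every-closed'' instance, i.e.\ to ``every uncountable Polish space can be partitioned into $\k$ closed sets.'' Therefore $\k \in \Spec$ iff every uncountable Polish space can be partitioned into $\k$ closed sets, which is exactly membership in the middle set of the displayed equalities for any choice of $X$; and by the six-fold equivalence this is the same as membership in the compact set and the $F_\s$ set as well. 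This gives all the required identities at once.

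The one point requiring a word of care — and the only place the argument is not completely mechanical — is the passage from ``partition into at least $2$ closed sets'' in the original definition of $\Spec$ to ``partition into $\k$ closed sets with $\k$ uncountable'' as used in Theorem~\ref{thm:Spec}: I would note that Sierpi\'nski's theorem rules out finite and countably infinite partitions into $\geq 2$ closed sets, so the ``at least $2$'' clause automatically forces uncountability and the two formulations pick out the same set of cardinals. Everything else is just unwinding the definitions of the ``some/every'' quantifiers and reading off the equivalences already established in Theorem~\ref{thm:Spec}, so the proof is short.

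\begin{proof}
Sierpi\'nski's theorem (see the introduction, or \cite{Sierpinski}) tells us that any partition of an uncountable Polish space into at least $2$ closed sets is uncountable; in particular every element of $\Spec$ is an uncountable cardinal, and the original definition of $\Spec$ agrees with $\set{\k > \aleph_0}{\text{there is a partition of }\R\text{ into }\k\text{ closed sets}}$.

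Now let $X$ be an arbitrary uncountable Polish space and let $\k$ be uncountable. By Theorem~\ref{thm:Spec} (applied with the space $\R$ for the ``some'' instances), the following are all equivalent: there is a partition of $\R$ into $\k$ closed sets; every uncountable Polish space can be partitioned into $\k$ compact sets; every uncountable Polish space can be partitioned into $\k$ closed sets; every uncountable Polish space can be partitioned into $\k$ $F_\s$ sets. In particular, $\k \in \Spec$ if and only if $X$ can be partitioned into $\k$ compact sets, if and only if $X$ can be partitioned into $\k$ closed sets, if and only if $X$ can be partitioned into $\k$ $F_\s$ sets. Since this holds for every uncountable $\k$, and since all three of the sets on the right-hand side of the claimed equalities consist only of uncountable cardinals (compact and closed sets being $F_\s$, and any partition of $X$ into $\geq 2$ $F_\s$ sets refining to a partition into $\geq 2$ closed sets, hence uncountable by Sierpi\'nski's theorem), the three displayed sets coincide with $\Spec$.
\end{proof}
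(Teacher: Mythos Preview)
Your argument is correct and is exactly the intended one: the paper states this corollary without proof because it follows immediately from Theorem~\ref{thm:Spec}, and your write-up just unwinds that implication.

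One small remark: the final parenthetical (``any partition of $X$ into $\geq 2$ $F_\sigma$ sets refining to a partition into $\geq 2$ closed sets'') is both unnecessary and not correct as stated. It is unnecessary because the three displayed sets are already restricted to $\kappa > \aleph_0$ by definition, so there is nothing to verify for countable $\kappa$. And the refinement claim itself fails in $\R$: an open interval is $F_\sigma$ but cannot be partitioned into countably many sets closed in $\R$ (adjoining the endpoints would give a forbidden countable closed partition of a closed interval, contradicting Sierpi\'nski). This does not affect your proof; simply delete the parenthetical.
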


\begin{corollary}\label{cor:serd}
$\min \!\big( \mathfrak{sp}(\text{\emph{\small closed}}) \big) = \ser \geq \dom$.
\end{corollary}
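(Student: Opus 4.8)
The plan is to read off both equalities directly from Corollary~\ref{cor:Spec}, from the two-line definition of $\ser$, and from the observation recorded just after Lemma~\ref{lem:ser} that $\dom$ equals the least cardinality of a cover of $\w^\w$ by compact sets. There is no genuinely new content to prove here; the work has already been done in Lemmas~\ref{lem:ser} and~\ref{lem:singularlimits} and Theorem~\ref{thm:Spec}, which together yield Corollary~\ref{cor:Spec}.

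First I would dispose of the inequality $\ser \geq \dom$. Any partition of $\w^\w$ into compact sets is in particular a \emph{cover} of $\w^\w$ by compact sets; since $\dom$ is the least cardinality of such a cover while $\ser$ (by its second characterization) is the least cardinality of such a partition, we get $\dom \leq \ser$. As a byproduct, $\ser$ is uncountable: $\w^\w$ is not $\s$-compact (a routine Baire-category or diagonalization argument), so $\dom \geq \aleph_1$, hence $\ser \geq \aleph_1$.

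Next I would handle $\min\!\big(\mathfrak{sp}(\text{\small closed})\big) = \ser$. Apply Corollary~\ref{cor:Spec} with $X = 2^\w$: this identifies $\mathfrak{sp}(\text{\small closed})$ with the set of uncountable cardinals $\k$ such that $2^\w$ can be partitioned into $\k$ closed sets. On one hand, by definition $\ser$ is the least cardinality of a partition of $2^\w$ into uncountably many closed sets, and since $\ser$ is itself uncountable by the previous paragraph, such a partition witnesses $\ser \in \mathfrak{sp}(\text{\small closed})$. On the other hand, every $\k \in \mathfrak{sp}(\text{\small closed})$ is an uncountable cardinal admitting a partition of $2^\w$ into $\k$ closed sets, so $\k \geq \ser$ by definition of $\ser$. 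Combining the two, $\ser = \min\!\big(\mathfrak{sp}(\text{\small closed})\big)$.

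I do not anticipate any obstacle: the only point requiring the slightest care is verifying that $\ser$ is uncountable, so that it actually lies in $\mathfrak{sp}(\text{\small closed})$ (which by convention contains only uncountable cardinals); this is immediate from $\dom \geq \aleph_1$, which is why I would establish $\ser \geq \dom$ before the equality rather than after it.
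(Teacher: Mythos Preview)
Your proposal is correct and follows essentially the same approach as the paper: the paper treats this corollary as immediate from Corollary~\ref{cor:Spec} together with the definition of $\ser$ and the observation (made just after Lemma~\ref{lem:ser}) that $\dom \leq \ser$ because every partition of $\w^\w$ into compact sets is in particular a cover. Your extra care in verifying that $\ser$ is uncountable (so that it actually belongs to $\mathfrak{sp}(\text{\small closed})$) is a reasonable point to make explicit, though the paper has already established $\dom \leq \ser$ and $\ser \leq \continuum$ prior to stating the corollary.
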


\noindent We note that the inequality $\ser \geq \dom$ has been observed before, and can be considered folklore. It is (arguably) implicit in Miller \cite{Miller}, and was observed explicitly by Hru\v{s}ak in \cite{Hrusak} and later by Banakh in \cite{Banakh}. Anticipating the main theorem in Section 3, note that this inequality gives us an easy way of excluding an initial segment of the uncountable cardinals from $\Spec$: simply make $\dom$ big.

\begin{corollary}\label{cor:singularlimits}
$\mathfrak{sp}(\text{\emph{\small closed}})$ is closed under singular limits.
\end{corollary}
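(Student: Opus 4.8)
The plan is to read this off immediately from Lemma~\ref{lem:singularlimits} once we identify $\Spec$ correctly. Recall that Corollary~\ref{cor:Spec}, applied with $X = 2^\w$, says that $\Spec$ is exactly the set of \emph{uncountable} cardinals $\k$ for which $2^\w$ can be partitioned into $\k$ closed sets. So, writing $S = \set{\k}{\text{there is a partition of }2^\w\text{ into }\k\text{ closed sets}}$ as in Lemma~\ref{lem:singularlimits}, we have $\Spec = \set{\k \in S}{\k > \aleph_0}$; in particular $\Spec \sub S$.

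Now I would verify the defining property directly. Let $\k$ be a singular cardinal with $\Spec \cap \k$ unbounded in $\k$. Since $\Spec \sub S$, the set $S \cap \k$ is also unbounded in $\k$, so Lemma~\ref{lem:singularlimits} gives $\k \in S$. But every singular cardinal is uncountable (indeed $\geq \aleph_\w$), so $\k \in \set{\mu \in S}{\mu > \aleph_0} = \Spec$. This is precisely the statement that $\Spec$ is closed under singular limits.

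There is no real obstacle: all the work has already been done in Lemma~\ref{lem:singularlimits} --- whose proof used the rescaling trick $2^\w \times 2^\w \homeo 2^\w$ to arrange that each piece of a $\lambda$-sized partition of $2^\w$ is itself a copy of $2^\w$, into which smaller partitions can be inserted --- and in Theorem~\ref{thm:Spec}, via Corollary~\ref{cor:Spec}. If one preferred a more self-contained argument avoiding explicit reference to $2^\w$, one could instead repeat the rescaling argument of Lemma~\ref{lem:singularlimits} verbatim with $2^\w$ replaced by an arbitrary uncountable Polish space $X$ (say a compact one, to keep the pieces compact), using Corollary~\ref{cor:Spec} only to know that partitions of $X$ into closed sets realize the same spectrum as partitions of $\R$; but the two-line deduction above is cleaner.
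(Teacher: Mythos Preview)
Your proposal is correct and follows exactly the paper's approach: the paper's proof is the single sentence ``This follows from Corollary~\ref{cor:Spec} and Lemma~\ref{lem:singularlimits},'' and you have simply unpacked that sentence, making explicit the (trivial) point that a singular cardinal is uncountable so that $\k \in S$ implies $\k \in \Spec$.
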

\begin{proof}
This follows from Corollary~\ref{cor:Spec} and Lemma~\ref{lem:singularlimits}.
\end{proof}

It is worth pointing out that the same result holds for $\bspec$, by a very similar argument. The analogous result also holds for the set $\mathfrak{sp}(\text{\footnotesize MAD})$ mentioned in the introduction \cite[Theorem 3.1]{Hechler}, again by a similar argument.

\begin{theorem}\label{cor:singularlimits}
$\mathfrak{sp}(\text{\emph{\small Borel}})$ is closed under singular limits.
\end{theorem}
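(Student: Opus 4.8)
The plan is to imitate the proof of Lemma~\ref{lem:singularlimits}, reading ``Borel isomorphic to $2^\w$'' wherever that proof says ``homeomorphic to $2^\w$''. The new ingredient needed is the Borel counterpart of the fact used there that $2^\w \times K$ is a copy of $2^\w$ for every nonempty closed $K \sub 2^\w$. I would isolate it as the following observation: \emph{if $B$ is an uncountable Borel subset of a Polish space $X$ and $\a \in \bspec$, then $B$ can be partitioned into $\a$ sets, each of which is Borel in $X$}. For the proof, note that $B$ with its relative Borel structure is an uncountable standard Borel space, so by Kuratowski's theorem (\cite[Theorem 15.6]{Kechris}) there is a bijection $g \colon 2^\w \to B$ carrying Borel sets to Borel sets in both directions; and since $B$ itself is Borel in $X$, the Borel subsets of $B$ are exactly the Borel subsets of $X$ lying inside $B$. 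Hence, if $\set{A_\eta}{\eta < \a}$ is a partition of $2^\w$ into $\a$ Borel sets -- one exists by $\a \in \bspec$ together with Theorem~\ref{thm:bspec} -- then $\set{g[A_\eta]}{\eta < \a}$ is the required partition of $B$.

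Granting this, the argument proceeds just as for closed sets. Let $\k$ be a singular cardinal with $\bspec \cap \k$ unbounded in $\k$, put $\nu = \cf(\k) < \k$, fix a strictly increasing sequence $\seq{\a_\xi}{\xi < \nu}$ of members of $\bspec$ cofinal in $\k$, and fix some $\lambda \in \bspec$ with $\nu \le \lambda < \k$ (which exists since $\bspec \cap \k$ is unbounded in $\k$ and $\nu < \k$). By Theorem~\ref{thm:bspec} it is enough to partition $2^\w$ into $\k$ Borel sets. Begin with a partition $\set{B_\xi}{\xi < \lambda}$ of $2^\w$ into $\lambda$ Borel sets (possible as $\lambda \in \bspec$), and using a homeomorphism $2^\w \homeo 2^\w \times 2^\w$, regard $\set{2^\w \times B_\xi}{\xi < \lambda}$ as a partition of $2^\w$ into $\lambda$ Borel sets, each of them uncountable. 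For $\xi < \nu$, apply the observation to split $2^\w \times B_\xi$ into $\a_\xi$ Borel sets; for $\nu \le \xi < \lambda$, leave $2^\w \times B_\xi$ undivided. The result is a partition of $2^\w$ into $\sum_{\xi < \nu}\a_\xi + \card{[\nu, \lambda)} = \k$ Borel sets, since $\nu \le \lambda < \k = \sup_{\xi < \nu}\a_\xi$.

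I do not anticipate any serious difficulty -- indeed, as remarked just before the statement, this is a ``very similar argument''. The single point requiring care, and the only genuine difference from the closed-set case, is that a Borel set $B_\xi \sub 2^\w$ need not be Polish in its subspace topology, so $2^\w \times B_\xi$ cannot be treated as an uncountable Polish space and Theorem~\ref{thm:bspec} cannot be invoked for it directly; this is precisely why the observation above is stated via standard Borel spaces and Kuratowski's theorem rather than via Polish spaces. With that in place, the remaining verifications are routine.
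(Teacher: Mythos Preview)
Your proposal is correct and follows the same overall architecture as the paper: choose $\nu=\cf(\k)$, a cofinal sequence $\seq{\a_\xi}{\xi<\nu}$ in $\bspec\cap\k$, some $\lambda\in\bspec$ with $\nu\le\lambda<\k$, use the product-with-$2^\w$ trick to upgrade a $\lambda$-sized Borel partition to one with uncountable pieces, and then refine the first $\nu$ pieces.

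The one genuine difference is in how the refinement step is carried out. You invoke Kuratowski's Borel isomorphism theorem to partition each uncountable Borel piece $B_\xi$ directly into $\a_\xi$ Borel subsets. The paper instead uses the perfect set property: inside each uncountable $B_\xi$ it selects a closed $C_\xi\sub B_\xi$ with $C_\xi\homeo 2^\w$, partitions $C_\xi$ into $\a_\xi$ Borel sets, and then carries along the leftover $B_\xi\setminus C_\xi$ as one additional piece of the final partition. Your route is marginally cleaner (no residual pieces to track), while the paper's route stays slightly closer to the closed-set argument by working inside an honest Polish subspace rather than a standard Borel space. Either tool suffices, and the cardinal arithmetic at the end is the same.
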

\begin{proof}
Suppose $\k$ is a singular cardinal and $\bspec \cap \k$ is unbounded in $\k$. Let $\nu = \mathrm{cf}(\k) < \k$, and let $\seq{\a_\xi}{\xi < \nu}$ be a sequence of cardinals in $\bspec$ increasing up to $\k$. Fix some $\lambda \in \bspec$ with $\nu \leq \lambda < \k$. 

Let $\set{B_\xi}{\xi < \lambda}$ be a partition of $2^\w$ into $\lambda$ Borel sets. 
Observe that $\set{2^\w \times B_\xi}{\xi < \lambda}$ is a partition of $2^\w \times 2^\w \homeo 2^\w$ into $\lambda$ uncountable Borel sets.
Thus we may (and do) assume that $B_\xi$ is uncountable for each $\xi < \lambda$. 
Every uncountable Borel set contains a closed subspace homeomorphic to $2^\w$. For each $\xi < \nu$, fix $C_\xi \sub B_\xi$ with $C_\xi \homeo 2^\w$, and
let $\P_\xi$ be a partition of $C_\xi$ into $\a_\xi$ Borel sets. 
Then $\bigcup_{\xi < \nu}\P_\xi \cup \set{B_\xi \setminus C_\xi}{\xi < \nu} \cup \set{B_\xi}{\nu \leq \xi < \lambda}$ is a partition of $2^\w$ into $\k$ Borel sets.
\end{proof}

We end this section with some terminology regarding trees, and two open questions regarding the Sierpi\'nski cardinal $\ser$.

Recall that a \emph{subtree} of $2^{<\w}$ is a subset of $2^{<\w}$ that is closed under taking initial segments. 
A subtree $T$ of $2^{<\w}$ is \emph{pruned} if every node of $T$ has a successor in $T$.
A \emph{branch} through $T$ means a function $b \in 2^\w$ such that $b \rest n \in T$ for all $n < \w$.
If $T$ is a subtree of $2^{<\w}$, we denote by $\tr{T}$ the set of all branches through $T$. 
It is not difficult to see that $\tr{T}$ is closed in $2^\w$ for any subtree $T$ of $2^{<\w}$; and conversely, for every closed $C \sub 2^\w$ there is a subtree $T$ of $2^{<\w}$ with $C = \tr{T}$ (for details, see \cite[Chapter 2]{Kechris}). Similar terminology is used for subtrees of $\w^{<\w}$ or of $2^{<k}$.

Representing closed sets with trees in this way, Theorem~\ref{thm:Spec} states that $\k \in \Spec$ if and only if $\k > \aleph_0$ and there is a MAD family of $\k$ subtrees of $2^{<\w}$. 
Similarly, $\k \in \Spec$ if and only if there is a MAD family of $\k$ finitely branching, pruned subtrees of $\w^{<\w}$. 
These characterizations of $\Spec$ explain Hru\v{s}\'ak's notation, writing $\mad_T$ for $\min \!\big( \Spec \big)$. This relationship between $\ser$ and $\mad$ raises the following questions.

\begin{question}
Is it consistent that $\ser$ has countable cofinality?
\end{question}

\noindent The corresponding question for $\mad$ was solved by Brendle in \cite{Brendlea}, where he used Shelah's template forcing technique to obtain a model of $\mad = \aleph_\w$. It is relatively easy to make $\ser$ singular of uncountable cofinality: e.g., by adding $\aleph_{\w_1}$ Cohen reals to a model of \ch, we get a model where $\dom = \ser = \continuum = \aleph_{\w_1}$.

It is also simple to prove the consistency of $\mad < \ser$. This holds, for example, in the Cohen model, where $\aleph_1 = \mad < \dom = \ser$.

\begin{question}
Is $\mad \leq \ser$?
\end{question}
We note that if $\ser < \mad$ is consistent, then proving it is likely very difficult. This is because $\ser < \mad$ implies $\dom < \mad$. The consistency of $\dom < \mad$ was an open question for a long time, solved by Shelah in \cite{Shelah}.
Shelah's technique will not work, however, for obtaining a model of $\ser < \mad$: his technique makes $\ser$ large for the same reasons it makes $\mad$ large. (Roughly, by an ``averaging of names'' argument, an ultrapower $\PP^\k/\mathcal U$ forces that if $\ser \geq \k$ then $\ser = \continuum$.)

\section{Forcing an (almost) arbitrary spectrum}

Every $\k \in \bspec$ is an example what Blass calls a ``$\mathbf{\Delta}^1_1$ characteristic'' in \cite{Blass}. Blass proves \cite[Theorems 8 and 9]{Blass} that there can be many cardinals between $\aleph_1$ and $\continuum$ that are not $\mathbf{\Delta}^1_1$ characteristics, and therefore are not in $\bspec$. For example, it is consistent to have the set of $\mathbf{\Delta}^1_1$ characteristics be equal to precisely $\set{\aleph_n}{n \text{ is a power of }17} \cup \{\aleph_\w,\aleph_{\w+1}\}$. Blass' method does not guarantee that any of these $\mathbf{\Delta}^1_1$ characteristics will be in $\bspec$, except of course for $\aleph_1$ and $\continuum$. But his results do show at least that $\bspec$ can contain large gaps, and that it is possible to surgically exclude specific cardinals from $\bspec$. (We should mention that the consistency of having $\aleph_2 \notin \bspec$ predates Blass' work, and is due to Miller \cite{Miller2}.)

In the other direction, extending an earlier result of his \cite[Theorem 4]{Miller}, Arnie Miller and the author showed in \cite[Theorem 3.12]{Brian&Miller} that:
\begin{theorem}\label{thm:BM}
For any $\k \geq \continuum$ with $\mathrm{cf}(\k) > \w$, there is a ccc forcing extension in which $\mathfrak{sp}(\text{\emph{\small closed}}) = [\aleph_1,\k]$.
\end{theorem}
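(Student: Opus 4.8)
The plan is to build a ccc forcing extension in which $\Spec = [\aleph_1,\k]$ for a prescribed cardinal $\k \geq \continuum$ of uncountable cofinality. First I would arrange, by a preliminary forcing or by passing to a suitable ground model, that $\ser = \aleph_1$ (for instance, start under \ch), so that $\aleph_1 \in \Spec$ and more importantly every value in $[\aleph_1, \continuum]$ will be easy to realize once the continuum is pushed up to $\k$. Then the main forcing should add, for each $\lambda \in [\aleph_1, \k]$, a partition of $2^\w$ into exactly $\lambda$ closed sets, while using a ccc poset so that no cardinals are collapsed. By Corollary~\ref{cor:Spec} it suffices to work with partitions of $2^\w$ into closed sets (equivalently, with MAD families of subtrees of $2^{<\w}$), and by Corollary~\ref{cor:singularlimits} the spectrum is automatically closed under singular limits, so it is enough to realize every regular $\lambda$ in the target interval and let the singular ones come for free.

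The concrete mechanism I expect to use is an iteration or finite-support product that, for each uncountable regular $\lambda \le \k$, generically adds a MAD family of $\lambda$ finitely branching pruned subtrees of $\w^{<\w}$ (equivalently a partition of $\w^\w$ into $\lambda$ compact sets). A natural poset here is the ``almost disjoint subtree'' analogue of Hechler's poset for prescribing the MAD spectrum: conditions approximate finitely many trees together with finite promises of almost-disjointness, and one runs a bookkeeping argument of length $\k$ to seal maximality. Since each such poset is $\sigma$-centered (or at least ccc), a finite-support iteration of length $\k$ stays ccc, makes $\continuum = \k$, and the usual density arguments guarantee that at stage $\lambda$ we actually obtain a MAD family of size $\lambda$; simultaneously one must ensure the final model does not acquire a MAD family of subtrees of size $>\k$, which is immediate because $\continuum = \k$ in the extension. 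The values below $\continuum$ that are not directly added are recovered by restricting to a $2^\w$-copy inside one piece of a coarser partition, exactly as in the proof of Lemma~\ref{lem:singularlimits}: split $2^\w$ into $\aleph_1$ closed pieces, each homeomorphic to $2^\w$, refine one piece into $\lambda$ closed sets when $\aleph_1 \le \lambda < \k$, and leave the rest alone.

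The main obstacle, and the part that needs genuine care rather than bookkeeping, is showing that the iteration does not collapse cardinals \emph{and} does not accidentally create partitions of intermediate sizes outside $[\aleph_1,\k]$ — but since the target interval is \emph{all} of $[\aleph_1,\k]$, the only real constraint is the ccc and the computation $\continuum = \k$, so this reduces to verifying $\sigma$-centeredness of the building-block poset and running a standard $\Delta$-system argument. A subtler point is handling the case $\cf(\k) > \w$ with $\k$ itself possibly singular: here one uses Corollary~\ref{cor:singularlimits} to get $\k \in \Spec$ for free once a cofinal set of smaller cardinals is in $\Spec$, and one checks that the length-$\k$ finite-support iteration with appropriate bookkeeping indeed forces $\continuum = \k$ (using $\cf(\k) > \w$ to keep the iteration ccc-productive and to avoid collapsing $\k$). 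Finally, one observes that nothing \emph{above} $\k$ enters $\Spec$ because every member of $\Spec$ is at most $\continuum = \k$, so the spectrum in the extension is exactly $[\aleph_1,\k]$.

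Since this theorem is quoted from \cite{Brian&Miller}, I would in the paper simply cite that reference for the details; the sketch above is the shape of the argument one finds there, built on the tree/MAD-family reformulation of $\Spec$ established in this section together with the closure-under-singular-limits corollary.
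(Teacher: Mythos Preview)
Your proposal is workable in outline but takes a genuinely different route from the paper's sketch, and misses the key simplification that makes the argument clean. The paper does \emph{not} use a Hechler-style MAD-family poset with bookkeeping to seal maximality, nor an iteration of length $\k$. Instead it defines, for each $X \sub 2^\w$, a $\sigma$-centered poset $\TT_X$ whose conditions are pairs $(t,B)$ with $t$ a finite pruned tree and $B$ a finite set of reals outside $X$ promised to lie in the generic closed set; the finite-support product $\TT_X^{\,\w}$ then adds an $F_\sigma$ set which, intersected with the ground-model reals, equals exactly the complement of $X$. The iteration has length only $\w_1$: stage $0$ adds $\k$ Cohen reals and seeds, for each $\mu \in [\aleph_1,\k]$, a partial partition $\P_0^\mu$ of $\mu$ singletons; at stage $\a$ one forces with $\prod_\mu \TT^{\,\w}_{X_\a^\mu}$ (where $X_\a^\mu = \bigcup \P_\a^\mu$) to add countably many closed sets covering all stage-$\a$ reals not yet in $\P_\a^\mu$. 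Because the iteration is ccc, every real in the final model appears at some countable stage and is covered at the next, so each $\P^\mu$ is a genuine partition into $\mu$ many $F_\sigma$ sets.

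The upshot is that no ``sealing'' or bookkeeping is needed: coverage, not maximality-against-all-names, is the target, and a length-$\w_1$ iteration suffices for that. Your approach of running a length-$\k$ iteration with Hechler-style tree posets and bookkeeping could in principle be made to work, but it is more complicated, and your description of how maximality gets sealed is too vague to count as a proof. The refinement trick you mention (split into $\aleph_1$ copies of $2^\w$ and refine one) is also unnecessary here, since the paper builds a size-$\mu$ partition directly for every $\mu$ in the interval simultaneously. You are right, of course, that the paper ultimately cites \cite{Brian&Miller} for full details; but the sketch it gives is the $\TT_X$/length-$\w_1$ argument above, and that is the template for the main theorem's more elaborate construction.
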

\noindent In particular, for any set $C$ of cardinals, there is a ccc forcing extension in which $C \sub \Spec$. 
We now sketch a proof of this theorem, in order to introduce some of the ideas used in the proof of the main theorem below, but in a simpler context.  
This sketch can be skipped by the reader requiring no such introduction.

\begin{proof}[Proof sketch of Theorem~\ref{thm:BM}]
Given $X \sub 2^\w$, let $\TT_{\!X}$ be the poset whose conditions are pairs $(t,B)$, where
\begin{itemize}
\item[$\circ$] There is  some $k \in \w$ such that $t$ is a subtree of $2^{<k}$, and $t$ is pruned: i.e., if $\s \in t$ with $|\s| < k$, then $\s$ has a proper extension in $t$.
\item[$\circ$] $B$ is a finite subset of $2^\w \setminus X$, and $b \rest k$ is a branch of $t$ for every $b \in B$.
\end{itemize}
The ordering on $\TT_{\!X}$ is defined by: $(t',B')$ extends $(t,B)$ if and only if $B' \supseteq B$ and $t'$ is an end extension of $t$ (meaning that $t = t' \cap 2^{<k}$ for some $k$).

For any $X \sub 2^\w$, $\TT_{\!X}$ is $\s$-centered.
The poset $\TT_{\!X}$ generically adds an infinite pruned subtree $T$ of $2^{<\w}$, defined from a generic filter $G$ on $\TT_{\!X}$ as $T = \bigcup \set{t}{(t,B) \in G \text{ for some } B}$. (Equivalently, $T$ is the evaluation in $V[G]$ of the name
$\dot T = \set{\langle \s,q \rangle}{q = (t,B) \text{ for some } B \text{, and } \s \in t}.$)
In the extension, $\tr{T}$ is a closed subset of $2^\w$ disjoint from $X$.

Let $\TT_{\!X}^{\,\w}$ denote the finite support product of countably many copies of $\TT_{\!X}$.
For any $X \sub 2^\w$, $\TT_{\!X}^{\,\w}$ is $\s$-centered. 
The poset $\TT_{\!X}^{\,\w}$ generically adds countably many infinite pruned subtrees $T_0,T_1,T_2,\dots$ of $2^{<\w}$, and in the extension,
$$\textstyle (2^\w)^V \cap \bigcup_{n < \w}\tr{T_n} \,=\, (2^\w)^V \setminus X.$$
In other words, $\TT_{\!X}^{\,\w}$ generically adds an $F_\s$ subset of $2^\w$ that intersects the ground model reals in precisely the complement of $X$. (Note: this poset may be familiar: it is the one usually used for showing that $\mathsf{M}$artin's $\mathsf{A}$xiom implies every $<\!\continuum$-sized subset of $2^\w$ is a $Q$-set.)

Let $C$ denote the set of cardinals in $[\aleph_1,\k]$.
We now define a finite support iteration of length $\w_1$ as follows. 
At stage $0$, force with the poset $\Q_0 = \CC_\k$ of finite partial functions $\k \to 2$, in order to add a set of $\k$ mutually generic Cohen reals. 
Let $\set{c_\xi}{\xi < \k}$ be an enumeration of these Cohen reals in $V^{\Q_0}$, and
for each $\mu \in C$, let $\P_0^\mu = \set{\{c_\xi\}}{\xi < \mu}$. Note that $\P^\mu_0$ is a $\mu$-sized collection of disjoint subsets of $\R$: we think of $\P^\mu_0$ as a first approximation to a $\mu$-sized partition we are trying to build.
At a later stage $\a$ of the iteration, suppose we have already obtained, for each $\mu \in C$, a $\mu$-sized collection $\P^\mu_\a$ of disjoint subsets of $\R$.
In $V^{\Q_\a}$, define $X^\mu_\a = \bigcup \P^\mu_\a$ for each $\mu \in C$, and then obtain $V^{\Q_{\a+1}}$ from $V^{\Q_\a}$ by forcing with $\prod_{\mu \in C}\TT^{\,\w}_{\!X_\a^\mu}$.
This adds countably many generic trees $T^\mu_{\a,0},T^\mu_{\a,1},T^\mu_{\a,2},\dots$ for each $\mu \in C$, and in $V^{\Q_{\a+1}}$ we define $\P^\mu_{\a+1} = \P^\mu_\a \cup \set{\tr{T^\mu_{\a,n}}}{n \in \w}$.

At the end of the iteration, in $V^{\Q_{\w_1}}$, let $\P^\mu = \bigcup_{\a < \w_1}\P_\a^\mu$. This is a $\mu$-sized collection of disjoint $F_\s$ subsets of $\R$. Furthermore, if $x$ is a real in $V^{\Q_{\w_1}}$, then there is some $\a < \w_1$ with $x \in V^{\Q_\a}$. At that stage of the iteration, either $x \in \bigcup \P^\mu_\a$, or if not, then $x \in \bigcup_{n \in \w} \tr{T^\a_{\mu,n}}$ because $\textstyle (2^\w)^{V^{\PP_\a}} \cap \bigcup_{n < \w}\tr{T^\a_{\mu,n}} \,=\, (2^\w)^{V^{\PP_\a}} \setminus X_\a^\mu$. Either way, $x \in \bigcup \P^\mu_{\a+1} \sub \bigcup \P^\mu$. Thus $\P^\mu$ is a partition of $2^\w$ into $F_\s$ sets in $V^{\Q_{\w_1}}$, and this means $\mu \in \Spec$ by Corollary~\ref{cor:Spec}.
\end{proof}

The main theorem is proved with a modification of this poset, with two major changes. First, the set $C$ will not necessarily be an interval of cardinals, but will consist only of those cardinals we wish to add to $\Spec$. 
Second, instead of a true iteration, we use a streamlined modification.
This modified iteration has actual finite sequences (rather than names for them) for the working parts of the $\TT_{\!X_\a^\mu}$, which seems to be necessary for proving that certain permutations of the Cohen reals extend to automorphisms of the entire poset. (And these automorphisms are essential for excluding cardinals $\notin \!C$ from $\bspec$.)
The definition of this modified iteration is reminiscent of the template forcing notions in \cite{Brendle1}, but with a well ordered template, so that it is essentially an iteration. 

\begin{theorem}\label{thm:main}
Let $C$ be a set of uncountable cardinals such that 
\begin{itemize}
\item[$\circ$] $\min (C)$ is regular,

\vspace{.5mm}

\item[$\circ$] $|C| < \min (C)$,

\vspace{.5mm}

\item[$\circ$] $C$ has a maximum with $\mathrm{cf}(\max (C)) > \w$,

\vspace{.5mm}

\item[$\circ$] $C$ is closed under singular limits, and

\vspace{.5mm}

\item[$\circ$] if $\lambda$ is singular and $\lambda \in C$, then $\lambda^+ \in C$.
\end{itemize}
Assuming \gch holds up to $\max (C)$, there is a ccc forcing extension in which $\mathfrak{sp}(\text{\emph{\small closed}}) = C$, and furthermore, if $\min(C) < \lambda \notin C$, then $\lambda \notin \bspec$.
\end{theorem}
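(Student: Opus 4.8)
The plan is to build a ccc iteration that generalizes the streamlined poset sketched in the proof of Theorem~\ref{thm:BM}, with the modifications announced just before the statement: use an arbitrary $C$ (not an interval) for the partitions we are adding, and replace the genuine iteration by a ``flat'' version in which the finite working parts of the tree forcings $\TT_{\!X_\a^\mu}$ are actual finite sequences rather than names, so that permutations of the underlying Cohen reals act on the whole poset. Concretely: start by forcing with $\CC_{\max(C)}$ to add $\max(C)$ mutually generic Cohen reals $\set{c_\xi}{\xi < \max(C)}$; for each $\mu \in C$ let $\P_0^\mu = \set{\{c_\xi\}}{\xi < \mu}$; then iterate for $\w_1$ steps, at stage $\a+1$ forcing over the current model with $\prod_{\mu \in C} \TT^{\,\w}_{\!X_\a^\mu}$ where $X_\a^\mu = \bigcup \P_\a^\mu$, but arranged so that conditions carry explicit finite trees and finite sets of ``forbidden'' branches drawn from the Cohen reals (coded by ordinals), and at limits take finite support. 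The whole iteration is ccc and $\s$-centered above stage $0$ in the appropriate sense, so cardinals and cofinalities are preserved; GCH up to $\max(C)$ controls the size of the iteration and guarantees $\continuum = \max(C)$ in the extension.

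The two things to verify split exactly as in the sketch. First, $C \sub \Spec$: for each $\mu \in C$ the sets $\P^\mu = \bigcup_{\a<\w_1}\P_\a^\mu$ form a $\mu$-sized family of disjoint $F_\s$ sets, and the catching-up argument (any real appears by some stage $\a<\w_1$, and is then absorbed into $\bigcup\P^\mu_{\a+1}$ because the stage-$\a$ generic trees cover exactly the complement of $X_\a^\mu$ among the stage-$\a$ reals) shows $\P^\mu$ is a partition; so $\mu \in \Spec$ by Corollary~\ref{cor:Spec}. One must also check $\Spec$ contains nothing \emph{extra}: the lower bound $\ser \geq \dom$ from Corollary~\ref{cor:serd} combined with the fact that the iteration keeps $\dom = \min(C)$ (Cohen forcing and the tree forcings are $\s$-centered / do not add dominating reals in the relevant sense) rules out cardinals below $\min(C)$; closure under singular limits (Corollary~\ref{cor:singularlimits}) together with the hypotheses on $C$ handles the singular cardinals; and the isomorphism-of-names argument below, applied with ``Borel'' weakened to ``closed,'' rules out the regular $\lambda$ strictly between consecutive elements of $C$ (and above $\max(C)$ there are none to worry about since $\max(C)$ exists and $\continuum = \max(C)$).

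The heart of the proof — and the main obstacle — is the last clause: if $\min(C) < \lambda \notin C$ then $\lambda \notin \bspec$. This is where the automorphism-rich structure is used, and it is the analogue of the folklore ``no MAD families of intermediate size after adding Cohen reals'' argument cited from \cite[Theorem 3.1]{Brendle1}. Suppose toward a contradiction that $\dot\P$ is a name for a partition of $2^\w$ into exactly $\lambda$ Borel sets. Each Borel set has a Borel code, which is a real, hence is added at some stage and depends on only countably many coordinates of the $\CC_{\max(C)}$-generic (plus countably many iteration coordinates); so the whole partition is supported on a set $S$ of Cohen-coordinates of size $\lambda$. Choose $S' \subseteq S$ with $|S'| = \lambda$ and (using $|C| < \min(C) \le \lambda$ and regularity of $\min(C)$, a $\Delta$-system / counting argument) arrange a permutation $\pi$ of $\max(C)$ that moves $S'$ off itself while fixing, for each $\mu \in C$, the initial segment $\mu$ \emph{setwise} in the way needed to preserve the $\P^\mu$-structure — the point being that $\lambda \notin C$ and the listed closure properties of $C$ guarantee such a $\pi$ can be found that extends to an automorphism $\hat\pi$ of the entire streamlined poset (this is exactly why the working parts must be real finite sequences, not names). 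Then $\hat\pi$ fixes the generic partition $\P^\mu$ for every $\mu \in C$ but moves $\dot\P$ to a genuinely different partition $\hat\pi\dot\P$; averaging/mixing over sufficiently many such automorphisms produces a member of $\P$ that must be split, contradicting that $\P$ is a partition into Borel sets of size $\lambda$. Making this precise — in particular pinning down the exact bookkeeping that lets a measure-$\lambda$ family of codes be shifted by an automorphism that nonetheless respects all the $\P^\mu$ for $\mu \in C$, and checking that ``Borel'' (not just ``closed'') codes are handled, since a Borel partition of intermediate size is a priori more flexible than a closed one — is the delicate part and the place where all five hypotheses on $C$ get used.
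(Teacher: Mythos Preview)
Your overall architecture matches the paper's, but three pieces of the plan are off in ways that matter. First, the iteration must have length $\kappa = \min(C)$, not $\omega_1$: the paper obtains $\ser \geq \kappa$ by showing $\mathrm{cov}(\mathcal M) \geq \kappa$, and that argument requires every $<\!\kappa$-sized family of meager sets to land in a single intermediate model $V^{\PP_\alpha}$ with $\alpha < \kappa$, after which a Cohen-generic real is produced. With an $\omega_1$-length iteration and $\kappa > \aleph_1$ this collapses, and you have no way to exclude cardinals below $\kappa$ from $\Spec$. Second, the Cohen reals are not indexed by $\theta = \max(C)$ with nested initial segments $\mu$ playing the role of $\P^\mu_0$; the paper instead uses pairwise \emph{disjoint} index sets $I_\mu$ of size $\mu$, one for each $\mu \in C$, so that any permutation fixing each $I_\mu$ setwise extends to a poset automorphism. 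With nested initial segments your permutation would have to fix every $\mu \in C$ as a set simultaneously, which is far too rigid for the name-shifting you need. (Also: the finite side-conditions $B$ in a working part $(t,B)$ are names for \emph{arbitrary} reals from earlier stages that must be kept on the tree, not ``forbidden'' Cohen reals.)

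Third, and most importantly, your description of the isomorphism-of-names step is not the right mechanism. One does not ``move $S'$ off itself'' and then ``average/mix''. Instead, one first selects $\nu \leq \lambda$ with $\alpha^{\aleph_0} < \nu$ for all $\alpha < \nu$ and $[\nu,\lambda] \cap C = \emptyset$ (this is exactly where the last two hypotheses on $C$ are used), applies the generalized $\Delta$-system lemma to the hereditary supports $D^\alpha = \bigcup_{\mu \in C}\mathrm{hdom}_\mu(\dot B_\alpha)$ of $\nu$ of the names, and thins to a set $\A_2$ of size $\nu$ on which (i) for every $\mu \in C$ with $\mu < \lambda$ the support $D^\alpha_\mu$ lies in the root $R$, and (ii) all the names become literally equal after applying the canonical automorphism $\bar\phi^{\alpha,0}$. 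One then manufactures a single \emph{new} name $\dot B_\lambda = \bar\phi^{0,\lambda}(\dot B_0)$, where for each $\mu \in C$ with $\mu > \lambda$ the target set $D^\lambda_\mu \subseteq I_\mu$ is chosen disjoint from $\bigcup_{\alpha<\lambda} D^\alpha_\mu$ --- possible precisely because $|I_\mu| = \mu > \lambda$, and this is why the disjoint indexing matters. Since $\bar\phi^{0,\lambda}$ fixes $q$, $q$ forces $\tilde B_\lambda \neq \emptyset$; and for each $\beta < \lambda$ one picks $\alpha \in \A_2$ with $D^\alpha \cap D^\beta \subseteq R$, so that $\bar\phi^{\alpha,\lambda}$ fixes $\dot B_\beta$ while carrying $\dot B_\alpha$ to $\dot B_\lambda$, giving $q \Vdash \tilde B_\lambda \cap \tilde B_\beta = \emptyset$. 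The contradiction is that the putative partition fails to cover $\R$, not that some member of it gets split.
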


Before proving this theorem, let us deduce some relatively easy corollaries from it, including the results mentioned in the introduction.

\begin{corollary}\label{cor:0}
Let $C$ be a set of uncountable cardinals such that 
\begin{itemize}
\item[$\circ$] $\aleph_1 \in C$,

\vspace{.5mm}

\item[$\circ$] $C$ is at most countable,

\vspace{.5mm}

\item[$\circ$] $C$ has a maximum with $\mathrm{cf}(\max (C)) > \w$,

\vspace{.5mm}

\item[$\circ$] $C$ is closed under singular limits, and

\vspace{.5mm}

\item[$\circ$] if $\lambda$ is singular and $\lambda \in C$, then $\lambda^+ \in C$.
\end{itemize}
Assuming \gch holds up to $\max (C)$, there is a ccc forcing extension in which $\mathfrak{sp}(\text{\emph{\small Borel}}) = \mathfrak{sp}(\text{\emph{\small closed}}) = C$.
\end{corollary}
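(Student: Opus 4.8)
The plan is to deduce Corollary~\ref{cor:0} directly from Theorem~\ref{thm:main}, so the first task is to check that a set $C$ satisfying the hypotheses of the corollary also satisfies those of the theorem. Since every member of $C$ is uncountable and $\aleph_1 \in C$, we have $\min(C) = \aleph_1$, which is regular; and since $C$ is at most countable, $|C| \leq \aleph_0 < \aleph_1 = \min(C)$. The remaining three conditions -- that $C$ has a maximum of uncountable cofinality, that $C$ is closed under singular limits, and that $\lambda^+ \in C$ whenever $\lambda \in C$ is singular -- are assumed outright. Hence Theorem~\ref{thm:main} applies, and, still assuming \gch up to $\max(C)$, it yields a ccc forcing extension in which $\mathfrak{sp}(\text{\small closed}) = C$ and in which $\min(C) < \lambda \notin C$ implies $\lambda \notin \bspec$.

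It then remains to upgrade $\mathfrak{sp}(\text{\small closed}) = C$ to $\bspec = C$ in that extension. For the inclusion $C \subseteq \bspec$: closed sets are Borel and, by the theorem of Sierpi\'nski quoted in the introduction, every partition of $\R$ into at least two closed sets is uncountable; hence every partition witnessing membership in $\mathfrak{sp}(\text{\small closed})$ also witnesses membership in $\bspec$, so $C = \mathfrak{sp}(\text{\small closed}) \subseteq \bspec$. For the reverse inclusion $\bspec \subseteq C$: let $\lambda \in \bspec$, so $\lambda$ is uncountable. If $\lambda = \aleph_1$, then $\lambda \in C$ by the first hypothesis of the corollary. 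If $\lambda > \aleph_1 = \min(C)$, then the contrapositive of the exclusion clause of Theorem~\ref{thm:main} gives $\lambda \in C$. Either way $\lambda \in C$, and therefore $\bspec = C$, which completes the argument.

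Because Theorem~\ref{thm:main} is being used as a black box, there is essentially no obstacle here: the only points needing a moment's attention are the bookkeeping check that the corollary's hypotheses imply the theorem's, the observation $\mathfrak{sp}(\text{\small closed}) \subseteq \bspec$ (which relies on Sierpi\'nski's fact that a partition of $\R$ into $\geq 2$ closed sets is automatically uncountable, so that it counts as a partition into uncountably many Borel sets), and the separate handling of $\lambda = \aleph_1$, which the exclusion clause of Theorem~\ref{thm:main} does not address but which is supplied by the assumption $\aleph_1 \in C$. All of the genuine difficulty lies in Theorem~\ref{thm:main} itself -- in particular in the isomorphism-of-names argument underlying its exclusion clause.
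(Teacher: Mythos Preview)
Your proposal is correct and is essentially the same approach as the paper, which simply records that the corollary ``follows immediately from the previous theorem.'' You have accurately spelled out the details the paper leaves implicit: verifying that $\aleph_1 \in C$ and $|C|\le\aleph_0$ give $\min(C)=\aleph_1$ regular and $|C|<\min(C)$, using $\mathfrak{sp}(\text{\small closed})\subseteq\bspec$ for one inclusion, and combining the exclusion clause of Theorem~\ref{thm:main} with the hypothesis $\aleph_1\in C$ for the other.
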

\begin{proof}
This follows immediately from the previous theorem.
\end{proof}

\begin{corollary}\label{cor:1}
Given any $A \sub \w \setminus \{0\}$, there is a forcing extension in which $\mathfrak{sp}(\text{\emph{\small Borel}}) = \set{\aleph_n}{n \in A} \cup \{\aleph_1,\aleph_\w,\aleph_{\w+1}\}$, and there is a forcing extension in which $\mathfrak{sp}(\text{\emph{\small closed}}) = \set{\aleph_n}{n \in A} \cup \{\aleph_\w,\aleph_{\w+1}\}$.
\end{corollary}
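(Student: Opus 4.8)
The plan is to read off both forcing extensions from Corollary~\ref{cor:0} and Theorem~\ref{thm:main}, applied to the prescribed sets of cardinals; all of the work lies in checking their hypotheses. Both target spectra have maximum $\aleph_{\w+1}$, so it is enough to work over a model in which \gch holds below $\aleph_{\w+1}$, which we may arrange by a preliminary forcing if it does not already hold. The single observation driving the hypothesis checks is that $\aleph_\w$ is the \emph{only} singular cardinal $\le\aleph_{\w+1}$, hence the only cardinal that can occur either as a singular limit of a subset of, or as a singular member of, either prescribed set.

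For the Borel statement, set $C=\set{\aleph_n}{n\in A}\cup\{\aleph_1,\aleph_\w,\aleph_{\w+1}\}$ and verify the hypotheses of Corollary~\ref{cor:0}. That $\aleph_1\in C$, that $C$ is at most countable, and that $\cf(\max(C))=\cf(\aleph_{\w+1})>\w$ are all immediate. For closure under singular limits, the only cardinal that could be a singular limit of a subset of $C$ is $\aleph_\w$, and $\aleph_\w\in C$ (the clause is vacuous when $A$ is finite, since then $C$ has no subset cofinal in $\aleph_\w$). Finally, the unique singular member of $C$ is $\aleph_\w$, whose successor $\aleph_{\w+1}$ also lies in $C$. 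Corollary~\ref{cor:0} then yields a ccc extension with $\bspec=\Spec=C$, hence in particular $\bspec=\set{\aleph_n}{n\in A}\cup\{\aleph_1,\aleph_\w,\aleph_{\w+1}\}$; this works for every $A\sub\w\setminus\{0\}$, including $A=\emptyset$.

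For the closed statement, assume $A\neq\emptyset$ and set $C=\set{\aleph_n}{n\in A}\cup\{\aleph_\w,\aleph_{\w+1}\}$, to which we apply Theorem~\ref{thm:main}. We have $|C|\le\aleph_0<\aleph_1\le\min(C)$, the cofinality of $\max(C)$ is uncountable, and the two singular clauses are verified exactly as in the Borel case; the only new point is that, since $A\neq\emptyset$, $\min(C)=\aleph_{\min A}$ is a successor cardinal and hence regular. Thus Theorem~\ref{thm:main} gives a ccc extension with $\Spec=C=\set{\aleph_n}{n\in A}\cup\{\aleph_\w,\aleph_{\w+1}\}$. (When $A=\emptyset$ the prescribed set has singular minimum $\aleph_\w$, so Theorem~\ref{thm:main} does not directly apply; such a model would have $\ser=\aleph_\w$ of countable cofinality, so the closed statement is to be understood with $A$ nonempty.) Beyond these routine verifications there is no genuine obstacle, since all the substantive content sits in Theorem~\ref{thm:main} and Corollary~\ref{cor:0}; the only mildly delicate point is the bookkeeping around the singular cardinal $\aleph_\w$, which is precisely why $\aleph_\w$ and $\aleph_{\w+1}$ must be adjoined to $\set{\aleph_n}{n\in A}$ in both spectra.
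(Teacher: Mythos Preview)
Your proof is correct and follows the same approach as the paper: arrange \gch up to $\aleph_{\w+1}$ by a preliminary forcing, then apply Corollary~\ref{cor:0} and Theorem~\ref{thm:main} to the prescribed sets $C$. You are more thorough than the paper in checking hypotheses, and your observation about the $A=\emptyset$ case for the closed spectrum is apt: Theorem~\ref{thm:main} does not cover $\min(C)=\aleph_\w$, and indeed whether $\ser$ can have countable cofinality is left open in the paper.
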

\begin{proof}
Fix some $A \sub \w \setminus \{0\}$.
First pass to a forcing extension in which \gch holds up to $\aleph_{\w+1}$.
Then, for the result about $\Spec$, apply Theorem~\ref{thm:main} with $C = \set{\aleph_n}{n \in A} \cup \{\aleph_\w,\aleph_{\w+1}\}$. Or, for the result about $\bspec$, apply Corollary~\ref{cor:0} with $C = \set{\aleph_n}{n \in A} \cup \{\aleph_1,\aleph_\w,\aleph_{\w+1}\}$.
\end{proof}

\begin{corollary}\label{cor:2}
Given any finite $A \sub \w \setminus \{0\}$, there is a forcing extension in which $\mathfrak{sp}(\text{\emph{\small Borel}}) = \set{\aleph_n}{n \in A} \cup \{\aleph_1\}$, and there is a forcing extension in which $\mathfrak{sp}(\text{\emph{\small closed}}) = \set{\aleph_n}{n \in A}$.
\end{corollary}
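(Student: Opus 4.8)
The plan is to read both assertions off of Corollary~\ref{cor:0} and Theorem~\ref{thm:main}, taking advantage of the fact that for a \emph{finite} index set $A$ one need not pad the target set with the cardinals $\aleph_\w$ and $\aleph_{\w+1}$ that were forced on us in Corollary~\ref{cor:1}. We may assume $A \neq \emptyset$, and put $N = \max(A)$. For the claim about $\bspec$ we apply Corollary~\ref{cor:0} with $C = \set{\aleph_n}{n \in A} \cup \{\aleph_1\}$; for the claim about $\Spec$ we apply Theorem~\ref{thm:main} with $C = \set{\aleph_n}{n \in A}$. In each case $C$ is a finite set of uncountable successor cardinals, with $\aleph_1 \in C$ in the first case and $\min(C) = \aleph_{\min(A)}$ (a regular cardinal) in the second; $\max(C) = \aleph_N$ is regular, so $\cf(\max(C)) > \w$; $C$ is finite, so it is at most countable and $|C| < \min(C)$; no member of $C$ is singular, so the condition ``$\lambda$ singular and $\lambda \in C$ implies $\lambda^+ \in C$'' holds vacuously; and $C$ is closed under singular limits for a trivial reason, since a singular $\k$ with $C \cap \k$ unbounded in $\k$ would satisfy $\k = \sup(C \cap \k) = \max(C \cap \k) \in C$, contradicting that every member of $C$ is a successor cardinal. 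So in each case the hypotheses of the relevant result are satisfied.

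Next, exactly as in the proof of Corollary~\ref{cor:1}, I would first pass to a forcing extension of $V$ in which \gch holds up to $\aleph_N = \max(C)$, and then force with the poset supplied by Corollary~\ref{cor:0} (respectively Theorem~\ref{thm:main}) for the chosen $C$. The composition of these two forcings is a notion of forcing over $V$, and in its extension $\bspec = \set{\aleph_n}{n \in A} \cup \{\aleph_1\}$ (respectively $\Spec = \set{\aleph_n}{n \in A}$), which is what we want.

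I do not expect any genuine obstacle: the corollary is a direct specialization of Theorem~\ref{thm:main}, and its entire content is that for a finite index set the two ``extra'' cardinals appearing in Corollary~\ref{cor:1} are no longer needed, because the requirements that $C$ be closed under singular limits and have a maximum of uncountable cofinality become automatic. The one point worth flagging is that $\Spec$ always contains $\continuum$ (partition $\R$ into its singletons), so the equality $\Spec = \set{\aleph_n}{n \in A}$ genuinely needs $A \neq \emptyset$, and in fact pins down $\continuum = \aleph_N$ in the resulting model; the degenerate case $A = \emptyset$ of the first assertion is simply the remark that \ch implies $\bspec = \{\aleph_1\}$.
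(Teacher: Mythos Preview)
Your proof is correct and follows the same approach as the paper, which simply says to argue as in Corollary~\ref{cor:1} but without putting $\aleph_\w$ and $\aleph_{\w+1}$ into $C$. Your explicit verification of the hypotheses and your remark flagging the need for $A \neq \emptyset$ in the $\Spec$ claim are useful additions that the paper leaves implicit.
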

\begin{proof}
This is proved in exactly the same way as the previous corollary, but without $\aleph_\w$ and $\aleph_{\w+1}$ put into $C$.
\end{proof}

\begin{proof}[Proof of Theorem~\ref{thm:main}]
Let $C$ be a set of uncountable cardinals satisfying the hypotheses listed in the statement of the theorem, and assume \gch holds up to $\max (C)$.
Let $\k = \min (C)$ and let $\theta = \max (C)$.

For each $\mu \in C$, let $I_\mu = \mu \times \{\mu,\theta^{+}\}$. (These are merely indexing sets, and for all practical purposes one may think of each $I_\mu$ as a set of atoms, or urlements, in the set-theoretic universe. The relevant properties of these $I_\mu$'s are that they are pairwise disjoint, each $I_\mu$ has size $\mu$, and the $I_\mu$ do not ``interact'' in any accidental way with any other sets in the proof.) 

Let $I = \bigcup_{\mu \in C}I_\mu$, and let $\PP_0$ denote the poset of finite partial functions from $I \times \w$ to $2$. Note that this is equivalent to the standard poset $\CC_\theta$ for adding $\theta$ mutually generic Cohen reals. For each $i \in I$, let $c_i$ denote in $V^{\PP_0}$ the Cohen real added by $\PP_0$ in coordinate $i$. More formally, $c_i$ denotes in $V^{\PP_0}$ the evaluation of the name
$\dot c_i = \set{\<(n,j),p\>}{p(i,n) = j}.$

Next we define, recursively, a poset $\< \PP_\a,\leq_{\PP_\a} \>$ for each $\a \leq \k$ with $\a > 0$.
At every stage of the recursion, $\PP_\a$ is defined so that $\PP_\b$ is a sub-poset of $\PP_\a$. 
Conditions in $\PP_\a$ are finite partial functions on $(I \times \w) \cup (\a \times C \times \w)$, and the $\PP_\a$ are defined so that for any $\b < \a$, the restriction of any condition in $\PP_\a$ to $(I \times \w) \cup (\b \times C \times \w)$ is a condition in $\PP_\b$.

At limit stages, we take $\PP_\a = \bigcup_{\xi < \a}\PP_\xi$ and $\leq_{\PP_\a} = \bigcup_{\xi < \a}\leq_{\PP_\xi}$. In other words, $\< \PP_\a,\leq_{\PP_\a} \>$ is the direct limit of $\seq{\< \PP_\xi,\leq_{\PP_\xi} \>}{\xi < \a}$ for limit $\a$.

At successor stages, suppose $\PP_\xi$ is given for every $\xi \leq \b$, and $\PP_\xi \supseteq \PP_\z$ whenever $\z \leq \xi \leq \b$.
Let $\a = \b+1$. 
Conditions in $\PP_\a$ are finite partial functions $p$ on
$(I \times \w) \cup (\a \times C \times \w)$ 
such that:
\begin{itemize}
\item[$\circ$] if $(i,n) \in (I \times \w) \cap \mathrm{dom}(p)$, then $p(i,n) \in \{0,1\}$.

\vspace{1mm}

\item[$\circ$] if $(\g,\mu,n) \in \mathrm{dom}(p)$ for some $\g \leq \b$, $\mu \in C$, and $n \in \w$, then $p(\g,\mu,n) = (t,B)$, where

\vspace{.5mm}

\begin{itemize}
\item[\raisebox{.3mm}{\scriptsize$\circ$}] $t$ is a pruned subtree of $2^{<k}$ for some $k \in \w$ (in this context, ``pruned'' means that if $\s \in t$ and $|\s| < k$, then $\s$ has an extension in $t$).

\vspace{.5mm}

\item[\raisebox{.35mm}{\scriptsize$\circ$}] $B$ is a finite set of nice $\PP_\g$-names for memebers of $2^\w$ such that for every $\dot b \in B$,
\begin{align*}
\quad \quad \quad \quad \quad  p \rest ((I \times \w) \cup (\g \times C \times \w)) \ \forces_{\PP_\g} \quad & \dot b \rest j \in t \text{ for every } j < k,  \\
& \dot b \neq \dot c_i \text{ for every } i \in I_\mu, \text{ and}  \\
& \dot b \text{ is not a branch of } \dot T^\xi_{\mu,m} \\
& \quad \text{ for any } \xi < \g \text{ and } m \in \w
\end{align*}
where $\dot c_i$ is the $\PP_0$-name described above, but interpreted as a $\PP_\g$-name, and
where for every $\xi < \g$ and $m \in \w$, $\dot T^\xi_{\mu,m}$ is the $\PP_\g$-name for a subtree of $2^{<\w}$ defined as follows:
$$\quad \quad \quad \quad \ \  \dot T^\xi_{\mu,m} = \set{\langle \s,q \rangle}{q(\xi,\mu,m) = (t',B') \text{ for some } B' \text{, and } \s \in t'}.$$
\end{itemize}
\end{itemize}
%
The extension relation $\leq_{\PP_\a}$ on $\PP_\a$ is defined as follows: given $p,q \in \PP_\a$, we write $q \leq_{\PP_\a} p$ (meaning that $q$ extends $p$) if and only if 
\begin{itemize}
\item[$\circ$] $\mathrm{dom}(q) \supseteq \mathrm{dom}(p)$,
\item[$\circ$] $q \rest (I \times \w) \supseteq p \rest (I \times \w)$, and
\item[$\circ$] if $(\g,\mu,n) \in \mathrm{dom}(p)$ for some $\g \leq \b$, $\mu \in C$, and $n \in \w$, and if $p(\g,\mu,n) = (t,B)$ and $q(\g,\mu,n) = (t',B')$, then $B' \supseteq B$ and $t'$ is an end extension of $t$ (meaning that $t = t' \cap 2^{<k}$ for some $k$).
\end{itemize}
Naturally, we abbreviate ``$\leq_{\PP_\a}$'' with ``$\leq$'' in situations where this creates no ambiguity. 

This completes the recursive definition of the $\PP_\a$ and $\leq_{\PP_\a}$.
It is easy to see that $\leq_{\PP_\a}$ is a partial order on $\PP_\a$ for all $\a \leq \k$, and that $\PP_\b$ is a sub-poset of $\PP_\a$ whenever $\b \leq \a \leq \k$.

\begin{lemma}\label{lem:embedded}
$\PP_\b$ is a complete sub-poset of $\PP_\a$ for all $\b \leq \a \leq \k$.
\end{lemma}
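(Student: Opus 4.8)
The plan is to verify the standard criterion for a complete sub-poset. Since it has already been observed that $\PP_\b$ is a sub-poset of $\PP_\a$ carrying the inherited order, it suffices to check two things: (i) incompatibility in $\PP_\b$ is preserved in $\PP_\a$; and (ii) every $p \in \PP_\a$ has a \emph{reduction} in $\PP_\b$, i.e.\ a condition $\bar p \in \PP_\b$ such that every $q \le \bar p$ in $\PP_\b$ is compatible with $p$ in $\PP_\a$. For every $\g \le \a$ let $\rho_\g \colon \PP_\a \to \PP_\g$ be the restriction map $p \mapsto p \rest ((I \times \w) \cup (\g \times C \times \w))$; by the way the posets were built this really does land in $\PP_\g$, and a direct inspection of the definitions of the $\PP_\g$ and their orderings (no induction needed) shows that $\rho_\g$ is order-preserving and restricts to the identity on $\PP_\g$. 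I would take $\bar p = \rho_\b(p)$ as the reduction of $p$. Granting the bookkeeping observation, clause (i) is immediate: a common extension $s \in \PP_\a$ of two conditions $p, q \in \PP_\b$ has $\rho_\b(s)$ as a common extension of $\rho_\b(p) = p$ and $\rho_\b(q) = q$ in $\PP_\b$.

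For clause (ii), fix $p \in \PP_\a$ and $q \le \rho_\b(p)$ in $\PP_\b$, and form the amalgam $s$ which agrees with $q$ on $\mathrm{dom}(q)$ and with $p$ on the tail coordinates $(\g,\mu,n) \in \mathrm{dom}(p)$ having $\g \ge \b$. The two pieces have disjoint domains (every coordinate of $q$ involves an ordinal strictly below $\b$), so $s$ is a well-defined finite partial function on $(I \times \w) \cup (\a \times C \times \w)$. That $s \le q$ and $s \le p$ is a routine check: it reduces to the facts that $\mathrm{dom}(\rho_\b(p)) \subseteq \mathrm{dom}(q)$, that on the shared coordinates below $\b$ the values of $q$ end-extend those of $p$ (because $q \le \rho_\b(p)$), and that on the tail coordinates $s$ copies $p$ verbatim.

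The one genuinely substantive step -- and the one I expect to require care -- is showing that $s$ is actually a condition of $\PP_\a$; every requirement on a condition is syntactic and transparent except the forcing clause ``$s \rest ((I \times \w) \cup (\g \times C \times \w)) \forces_{\PP_\g} \dots$'' attached to each coordinate $(\g,\mu,n) \in \mathrm{dom}(s)$. If that coordinate comes from $q$, so $\g < \b$, then the $\g$-part of $s$ literally equals the $\g$-part of $q$ (all tail coordinates of $s$ lie above $\b > \g$), so the clause holds because it held for $q$. If the coordinate comes from $p$, so $\g \ge \b$, then the $\g$-part of $s$ is $\rho_\g(s)$; from $s \le p$ in $\PP_\a$ and order-preservation of $\rho_\g$ we get $\rho_\g(s) \le \rho_\g(p) = p \rest ((I \times \w) \cup (\g \times C \times \w))$ in $\PP_\g$, and then monotonicity of the forcing relation gives that the stronger condition $\rho_\g(s)$ forces over $\PP_\g$ everything $p \rest ((I \times \w) \cup (\g \times C \times \w))$ forces -- which is exactly the clause required of $s$. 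Hence $s \in \PP_\a$, so $s$ witnesses the compatibility of $p$ and $q$, and (ii) is proved.

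Finally, (i) and (ii) yield completeness in the usual way: a maximal antichain $A$ of $\PP_\b$ is still an antichain of $\PP_\a$ by (i), and it is still maximal since any $p \in \PP_\a$ is compatible in $\PP_\b$ with some $a \in A$ via a common extension of $a$ and $\rho_\b(p)$, which by (ii) is compatible with $p$ in $\PP_\a$. If one prefers not to invoke monotonicity of forcing across all the $\PP_\g$ simultaneously, the same amalgamation can be carried out only for the successor step -- proving directly that $\PP_\b$ is a complete sub-poset of $\PP_{\b+1}$ -- and then the general statement follows by transitivity of ``complete sub-poset'' together with the fact that the $\PP_\a$ are direct limits at limit stages.
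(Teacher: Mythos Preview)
Your proposal is correct and follows essentially the same route as the paper: restrict $p$ to the coordinates below $\b$ to get a reduction, and amalgamate any $q \le \rho_\b(p)$ with the tail of $p$ to witness compatibility. The paper's proof is in fact terser than yours---it simply asserts ``Then $s \in \PP_\a$'' without verifying the forcing clauses at the tail coordinates---so your explicit treatment of that step, together with the fallback of proving the successor case and invoking transitivity through direct limits, is a welcome addition rather than a deviation. The only expositional wrinkle is that you invoke ``$s \le p$ in $\PP_\a$'' while still in the middle of showing $s \in \PP_\a$; but since the syntactic extension relation can be checked independently, and since the membership $\rho_\g(s) \in \PP_\g$ needed for monotonicity follows by the finite induction on coordinates that your alternative makes explicit, there is no real gap.
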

\begin{proof}
Fix $\b \leq \a \leq \k$. It is clear that if $p,q \in \PP_\a$ and $p$ and $q$ are incompatible in $\PP_\a$, then $p \rest \b$ and $q \rest \b$ are incompatible in $\PP_\b$. Conversely, if two conditions are incompatible in $\PP_\b$, they remain incompatible in $\PP_\a$. 
So to prove the lemma, it suffices to show that every maximal antichain in $\PP_\b$ is also a maximal antichain in $\PP_\a$. Suppose $\A$ is a maximal antichain in $\PP_\b$, and let $p \in \PP_\a$. By the maximality of $\A$, there is some $q \in \A$ such that $q$ is compatible with $p \rest \b$ in $\PP_\b$. Let $r \in \PP_\b$ be a common extension of $q$ and $p \rest \b$. Define a function $s$ on $(I \times \w) \cup (\a \times \C \times \w)$ by setting $s=r$ on $(I \times \w) \cup (\b \times \C \times \w)$ and setting $s=p$ on $(I \times \w) \cup ((\a \setminus \b) \times \C \times \w)$. Then $s \in \PP_\a$, and $s$ is a common extension of $p$ and $q$ in $\PP_\a$. As $p$ was arbitrary, this shows $\A$ is a maximal antichain in $\PP_\a$.
\end{proof}

\begin{lemma}\label{lem:ccc}
$\PP_\k$ has the ccc.
\end{lemma}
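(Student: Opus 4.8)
The plan is to prove that $\PP_\k$ has the ccc by a $\Delta$-system argument, exploiting the fact that conditions are \emph{finite} partial functions. First I would observe that it suffices to show that any uncountable set of conditions in $\PP_\k$ contains two compatible members. Given such a set $\{p_\alpha : \alpha < \w_1\}$, I would apply the $\Delta$-system lemma to the domains $\mathrm{dom}(p_\alpha)$, which are finite subsets of $(I \times \w) \cup (\k \times C \times \w)$, to obtain an uncountable $W \sub \w_1$ and a finite root $R$ such that $\mathrm{dom}(p_\alpha) \cap \mathrm{dom}(p_\beta) = R$ for distinct $\alpha, \beta \in W$. By further refining $W$ (using that there are only countably many possibilities, since the trees $t$ appearing in the second coordinate live in the countable set $2^{<\w}$, and the first-coordinate values lie in $\{0,1\}$) I may assume that all $p_\alpha$ for $\alpha \in W$ agree on $R$ — at least on the $I \times \w$ part and on the ``tree part'' $t$ of each relevant value, though not necessarily on the name parts $B$.

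The key difficulty is that the name components $B$ in the values $p(\g,\mu,n) = (t,B)$ are $\PP_\g$-names, not finitary objects, so two conditions agreeing on their domains and on the tree parts need not be literally equal on the root; and more importantly, to witness compatibility I must check that the amalgamated function is actually a \emph{condition}, i.e.\ that the forcing statements in the definition of $\PP_{\g+1}$ still hold. Here is where I would argue by induction on $\a$ that for $p, q \in \PP_\a$ with $\mathrm{dom}(p) \cap \mathrm{dom}(q) = R$, the function $r$ obtained by taking $r(\g,\mu,n) = (t \cup t', B \cup B')$ when both are defined on the root (with $t = t'$ forced by our refinement — or taking the end-extension union if the trees have different heights, which we can also arrange to avoid by refining $W$ so all trees on the root have the same height and hence are equal) and $r = p \cup q$ off the root, is a condition extending both. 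The point is that the displayed forcing requirements for $r$ at coordinate $(\g,\mu,n)$ only concern $r \rest ((I \times \w) \cup (\g \times C \times \w))$, which by the induction hypothesis is a condition in $\PP_\g$ extending both $p \rest \g$ and $q \rest \g$; since $\PP_\g$ is a complete (indeed, by Lemma~\ref{lem:embedded}, incompatibility-preserving) subposet and $p, q$ were conditions, the relevant statements ``$\dot b \rest j \in t$'', ``$\dot b \neq \dot c_i$'', ``$\dot b$ is not a branch of $\dot T^\xi_{\mu,m}$'' are each forced by $p \rest \g$ or $q \rest \g$ separately, hence by the stronger condition $r \rest \g$, for every $\dot b \in B \cup B'$.

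Concretely, the induction would run as follows. At $\a = 0$, $\PP_0 = \CC_\theta$ is ccc (it is a finite-support product of Cohen forcing). At limit $\a$, a condition in $\PP_\a$ lives in some $\PP_\xi$ with $\xi < \a$ (since it has finite domain), so any uncountable antichain would have uncountably many members in a single $\PP_\xi$, contradicting the inductive hypothesis. At successor $\a = \b+1$: run the $\Delta$-system argument above, refine so that the restrictions $p_\alpha \rest \b$ for $\alpha \in W$ pairwise agree on $R \cap ((I\times\w) \cup (\b \times C \times \w))$ as far as the tree parts and the $I \times \w$ values go, and pick any two $\alpha, \beta \in W$; then $p_\alpha \rest \b$ and $p_\beta \rest \b$ are compatible in $\PP_\b$ by the inductive hypothesis applied to $\PP_\b$, and amalgamating as described produces a common extension in $\PP_\a$. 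The main obstacle — and the step requiring care rather than routine bookkeeping — is verifying that the name parts cause no trouble: I would handle this by noting that $B \cup B'$ is still a \emph{finite} set of nice $\PP_\g$-names, and that each clause in the definition is a statement forced by one of $p_\alpha \rest \g$, $p_\beta \rest \g$ individually (because each was a condition), hence forced by their common extension $r \rest \g$; crucially the trees $\dot T^\xi_{\mu,m}$ for $\xi < \g$ are defined uniformly from the generic, so ``$\dot b$ is not a branch of $\dot T^\xi_{\mu,m}$'' is preserved under passing to stronger conditions and under the amalgamation. This completes the induction and shows $\PP_\k$ is ccc.
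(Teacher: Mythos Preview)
Your core combinatorics is exactly the paper's: apply the $\Delta$-system lemma to the finite domains, thin so that conditions agree on $R \cap (I \times \w)$ and on the tree parts $t_j$ for $j \in R \setminus (I \times \w)$, and then amalgamate any two survivors by taking $r(j) = (t_j, B^p_j \cup B^q_j)$ on $R$ and $r = p \cup q$ off $R$. The paper does this directly in $\PP_\k$, in one pass, with no induction on $\a$ at all.

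Your transfinite induction on $\a$ is extra scaffolding that does not actually do work, and in two places it does not hold up as written. First, the limit step: from ``each condition lies in some $\PP_\xi$ with $\xi < \a$'' you cannot conclude that uncountably many lie in a \emph{single} $\PP_\xi$ when $\mathrm{cf}(\a) = \w_1$. Second, and more importantly, at the successor step you write that $r \rest \g$ is a condition in $\PP_\g$ extending both $p \rest \g$ and $q \rest \g$ ``by the induction hypothesis,'' but your stated inductive hypothesis is merely that $\PP_\b$ is ccc, which says nothing about this particular amalgam being a condition or about this particular pair being compatible.

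What is genuinely needed---and what the paper leaves implicit---is the separate, straightforward verification that the amalgam $r$ is a condition. This is a short induction on $\g \leq \k$ (or, equivalently, a finite induction over the levels appearing in $\mathrm{dom}(r)$) of the statement ``$r \rest \g \in \PP_\g$ and $r \rest \g \leq p \rest \g, q \rest \g$.'' The successor step of \emph{that} induction is precisely the observation you make: each clause ``$\dot b \rest j \in t$,'' ``$\dot b \neq \dot c_i$,'' ``$\dot b \notin \tr{\dot T^\xi_{\mu,m}}$'' for $\dot b \in B^p_j \cup B^q_j$ is already forced by one of $p \rest \g$, $q \rest \g$, hence by $r \rest \g$. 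Once this is done, the ccc follows immediately from the $\Delta$-system argument with no further induction---so you have correctly located the one nontrivial point, but packaged it inside the wrong induction.
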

\begin{proof}
Let $\A_0$ be an uncountable subset of $\PP_\k$. By the $\Delta$-system lemma, there is an uncountable $\A_1 \sub \A_0$ and a finite $R \sub (I \times \w) \cup (\k \times C \times \w)$ such that for any $p,q \in \A_1$, $\mathrm{dom}(p) \cap \mathrm{dom}(p) = R$. 
If $i \in R \cap (I \times \w)$, then $p(i) \in \{0,1\}$ for every $p \in \A_1$. As there are only finitely many functions $R \cap (I \times \w) \to 2$, this implies there is some uncountable $\A_2 \sub \A_1$ such that $p \rest (R \cap (I \times \w)) = q \rest (R \cap (I \times \w))$ for any $p,q \in \A_2$.
For every $p \in \A_2$, if $j \in R \setminus (I \times \w)$, then $p(j) = (t,B)$ for some finite subtree $t$ of $2^{<\w}$. As there are only countably many finite subtrees of $2^{<\w}$, there is some uncountable $\A_3 \sub \A_2$ such that for each $j \in R \setminus (I \times \w)$, there is some fixed $t_j$ such that for any $p \in \A_3$, $p(j) = (t_j,B^p_j)$ for some $B^p_j$.
But any two members of $\A_3$ are compatible: if $p,q \in \A_3$ then
$$r(j)=
\begin{cases}
p(j) & \text{ if } j \in \mathrm{dom}(p) \setminus R \\
q(j) & \text{ if } j \in \mathrm{dom}(q) \setminus R \\
(t_j,B^p_j \cup B^q_j) & \text{ if } j \in \mathrm{dom}(p) \cap \mathrm{dom}(q) = R
\end{cases}
$$
is a common extension of $p$ and $q$. Thus $\PP_\k$ has no uncountable antichains, and is ccc. (In fact we have shown a bit more: that $\PP_\k$ has property $K$.)
\end{proof}

If $\b \leq \a$, then because $\PP_\b \sub \PP_\a$, we may (and do) consider every $\PP_\b$-name to be a $\PP_\a$-name as well.
Let us also set the convention that in $V^{\PP_\a}$, the evaluation of a $\PP_\a$-name is indicated by removing its dot. So, for example, $c_i$ denotes in $V^{\PP_\a}$ (for any $\a \leq \k$) the Cohen real added by $\PP_0$ in coordinate $i$, and $T^\xi_{\mu,n}$ denotes in $V^{\PP_\a}$, for any $\a \geq \xi$, the evaluation of the name $\dot T^\xi_{\mu,n}$.

\begin{lemma}\label{lem:Spec}
$C \sub \mathfrak{sp}(\text{\emph{\small closed}})$ in $V^{\PP_\k}$.
\end{lemma}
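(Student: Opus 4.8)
The plan is to show that for each $\mu \in C$, the sets $\tr{T^\xi_{\mu,n}}$ added along coordinates indexed by $\mu$, together with the relevant Cohen reals, assemble into a partition of $2^\w$ into exactly $\mu$ closed (in fact $F_\s$) sets in $V^{\PP_\k}$. This mirrors the argument in the proof sketch of Theorem~\ref{thm:BM}, but now the ``iteration'' is built into a single poset $\PP_\k$ of length $\k = \min(C)$, so I must track genericity stage by stage. For a fixed $\mu$, set $\P^\mu_\a = \set{\{c_i\}}{i \in I_\mu} \cup \set{\tr{T^\xi_{\mu,n}}}{\xi < \a,\ n \in \w}$ in $V^{\PP_\a}$, and let $X^\mu_\a = \bigcup \P^\mu_\a$. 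The defining clauses of $\PP_{\a+1}$ say precisely that each new tree $T^\a_{\mu,n}$ has all its branches disjoint from $X^\mu_\a$ (no branch equals any $c_i$ for $i \in I_\mu$, and no branch lies on an earlier $T^\xi_{\mu,m}$), so the $\tr{T^\a_{\mu,n}}$ are genuinely disjoint from everything in $\P^\mu_\a$ and from each other. Hence $\P^\mu := \bigcup_{\a < \k}\P^\mu_\a$ is a pairwise-disjoint family of $F_\s$ subsets of $2^\w$ of size $\mu$ (the size is $\mu$ because $|I_\mu| = \mu$ and there are only $\k \cdot \aleph_0 = \k \le \mu$ trees, and disjointness forces them all to be distinct and nonempty once we check nonemptiness, which follows from the pruned-tree generic construction as in the sketch).

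The key point is that $\P^\mu$ is in fact a \emph{partition}: every real of $V^{\PP_\k}$ lands in some member. Here I would use the ccc of $\PP_\k$ (Lemma~\ref{lem:ccc}) together with the fact that $\PP_\k = \bigcup_{\a < \k}\PP_\a$ is a continuous increasing union of complete subposets (Lemma~\ref{lem:embedded}) and $\cf(\k) = \k > \w$ (since $\k$ is regular and uncountable). Given any $x \in 2^\w \cap V^{\PP_\k}$, a nice name for $x$ involves only countably many conditions, hence only countably many ordinals below $\k$, so by regularity $x \in V^{\PP_\a}$ for some $\a < \k$. Then either $x \in X^\mu_\a$ already, or, by a density/genericity argument, $x$ becomes a branch of one of the trees $T^\a_{\mu,n}$ added at stage $\a+1$. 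The density argument is the analogue of the ``$\TT^{\,\w}_X$ covers the ground model reals'' fact from the sketch: for each ground-model real $y \notin X^\mu_\a$, the set of conditions forcing $y$ to be a branch of some $\dot T^\a_{\mu,n}$ is dense in $\PP_{\a+1}$, because given any condition one can extend the finite tree in some coordinate $n$ (not yet constrained to avoid $y$) to include a longer and longer initial segment of $y$, while only finitely many side-conditions $\dot b$ need to be kept off that tree and $y$ differs from each of them. So in $V^{\PP_{\a+1}}$, and a fortiori in $V^{\PP_\k}$, we get $x \in \bigcup_{n}\tr{T^\a_{\mu,n}} \subseteq \bigcup \P^\mu$. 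Thus $\P^\mu$ partitions $2^\w$ into $\mu$ $F_\s$ sets, so $\mu \in \Spec$ in $V^{\PP_\k}$ by Corollary~\ref{cor:Spec}.

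I expect the main obstacle to be the bookkeeping in the density argument for clause three of the definition of $\PP_{\a+1}$: I must verify that, working below a given condition $p$, I can find a coordinate $n$ such that extending the tree component $p(\a,\mu,n)$ to absorb a longer segment of the target real $y$ is compatible with \emph{all} the name-side-conditions $\dot b$ already appearing in $p$ at that coordinate, and with the requirement that those $\dot b$ are forced (over $\PP_\a$) to avoid the growing tree. The resolution is the same as in the MA-for-$Q$-sets poset: use a fresh coordinate $n$ where $p$ imposes no side-conditions, or equivalently note that $y$ is $\ne$ each of the finitely many $\dot b$'s (as forced), so there is a level $k$ at which $y\rest k$ differs from every $\dot b \rest k$, and one extends the tree only along $y\rest k$ below that split — here I would lean on the fact that the names $\dot b$ are nice $\PP_\a$-names and $\PP_\a$ is ccc, so ``$\dot b \ne y$'' being forced gives a single $k$ working for all of them after passing to a stronger condition. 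A secondary point worth a sentence is that the trees are genuinely infinite (each $\tr{T^\a_{\mu,n}}$ is nonempty closed), which follows because the conditions require the finite trees to be pruned, so the generic $T^\a_{\mu,n}$ is an infinite pruned tree and $\tr{T^\a_{\mu,n}} \ne \emptyset$.
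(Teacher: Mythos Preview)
Your covering argument is essentially the paper's, and your use of Lemmas~\ref{lem:ccc} and~\ref{lem:embedded} together with the regularity of $\k$ to catch every real at some stage $\a<\k$ is correct. (The paper's version is slightly slicker: rather than growing the tree along $y$ level by level, it simply picks a fresh coordinate $N$ with $(\a,\mu,N)\notin\mathrm{dom}(p)$ and sets $q=p\cup\{\langle(\a,\mu,N),(\emptyset,\{\dot y\})\rangle\}$, which in one stroke forces $y\in\tr{T^\a_{\mu,N}}$ because side conditions are, by definition, kept as branches.)

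There is, however, a genuine gap in the disjointness half. You write that ``the defining clauses of $\PP_{\a+1}$ say precisely that each new tree $T^\a_{\mu,n}$ has all its branches disjoint from $X^\mu_\a$.'' That is a misreading of the poset. The clauses constrain the \emph{side conditions} $\dot b\in B$: they must restrict to branches of the current finite tree $t$, and they must be forced to differ from every $c_i$ ($i\in I_\mu$) and from every branch of every earlier $T^\xi_{\mu,m}$. Nothing in the definition says the generic tree $T^\a_{\mu,n}$ itself avoids the $c_i$ or the earlier trees; the side-condition requirements are there precisely so that the density arguments for disjointness can go through (one must be able to prune the finite tree away from $c_i$ or from the other tree while still keeping every $\dot b$ inside). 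Concretely, to show $c_i\notin\tr{T^\a_{\mu,n}}$ one must, below an arbitrary $p$, first pass to $q\leq p\rest\a$ deciding $c_i\rest\ell$ and separating it from all $\dot b\rest\ell$ for $\dot b\in B$, and then end-extend $t$ to a tree of height $\ell{+}1$ that omits the node $c_i\rest\ell$. The argument for $\tr{T^\b_{\mu,m}}\cap\tr{T^\a_{\mu,n}}=\emptyset$ is more delicate still: one decides all side conditions of both trees up to some level $\ell$, checks that each $\dot b$ from the $\b$-side is separated at level $\ell$ from each $\dot a$ on the $\a$-side, and then end-extends $t_\b$ and $t_\a$ so that they share no nodes on level $\ell$ while each keeps its own side conditions. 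None of this is automatic from the definition.

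A related point: you list the individual sets $\tr{T^\xi_{\mu,n}}$ as members of $\P^\mu$, and claim they are pairwise disjoint ``from each other.'' For $\xi\neq\xi'$ this follows from the density argument above, but for the \emph{same} $\xi$ and $m\neq n$ there is no such argument (a single name $\dot b$ could legitimately appear as a side condition at both $(\xi,\mu,m)$ and $(\xi,\mu,n)$, obstructing separation). The paper avoids this by taking the partition pieces to be $\bigcup_{n\in\w}\tr{T^\a_{\mu,n}}$, one $F_\s$ set per stage $\a$, so that same-stage disjointness is never needed.
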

\begin{proof}
Let $\mu \in C$.
We claim that in $V^{\PP_\k}$,
$$\P = \set{\{c_i\}}{\vphantom{2^i}i \in I_\mu} \cup \textstyle \set{\bigcup_{n \in \w}\tr{T^\a_{\mu,n}}}{\a < \k}$$
is a partition of $2^\w$. Observe that every member of $\P$ is an $F_\s$ subset of $2^\w$, and $|\P| = \mu$. (It is clear that each $T^\a_{\mu,n}$ is a subtree of $2^{<\w}$, which means the sets of the form $\bigcup_{n \in \w} \tr{T^\a_{\mu,n}}$ are all $F_\s$.) Thus if $\P$ is a partition, then $\mu \in \Spec$ by Theorem~\ref{thm:Spec}.

The members of $\P$ come in two types, so in order to show they are pairwise disjoint, we have three things to prove.

First, if $i,i' \in I_\mu$ and $i \neq i'$, then clearly $c_i \neq c_{i'}$, i.e., $\{c_i\} \cap \{c_{i'}\} = \0$.

Second, fix $i \in I_\mu$, $\a < \k$, and $n \in \w$. We claim $\{c_i\} \cap \tr{T^\a_{\mu,n}} = \0$, or equivalently, $c_i \notin \tr{T^\a_{\mu,n}}$.
To see this, fix some $p \in \PP_\k$: we will find an extension $r$ of $p$ forcing $c_i \notin \tr{T^\a_{\mu,n}}$. 
Extending $p$ to $p \cup \{\< (\a,\mu,n),(\0,\0) \>\}$, if necessary, we may (and do) assume $(\a,\mu,n) \in \mathrm{dom}(p)$. Let $(t,B) = p(\a,\mu,n)$, with $t$ a pruned subtree of $2^{<k}$. 
From the definition of $\PP_\k$, we know $p \rest \a \forces_{\PP_\a} \dot b \neq \dot c_i$ for every $\dot b \in B$.
Using this, and the fact that $B$ is finite, there is an extension $q$ of $p$ in $\PP_\a$, and some finite sequence $\s \in 2^\ell$ for some $\ell > k$, such that $q \forces_{\PP_\a}$ ``$\dot c_i \rest \ell = \s$ but $\dot b \rest \ell \neq \s$ for all $\dot b \in B$." 
Let $t'$ be the largest subtree of $2^{<\ell+1}$ that is an end extension of $t$ and that does not contain $\s$.
(In other words, $\t \in t'$ if and only if $\t \in 2^{<\ell+1} \setminus \{\s\}$ and $\t \rest k \in t$. This is a subtree of $2^{<\ell+1}$, and it is an end extension of $t$ because $\ell > k$ and $\s \in 2^\ell$.)
Define a function $r$ on $(I \times \w) \cup (\k \times \C \times \w)$ by setting 
$$r(j)=
\begin{cases}
q(j) & \text{ if } j \in (I \times \w) \cup (\a \times \C \times \w) \\
p(j) & \text{ if } j \in (I \times \w) \cup ((\k \setminus \a) \times \C \times \w) \text{ but } j \neq (\a,\mu,n)),  \\
(t',B) & \text{ if } j = (\a,\mu,n).
\end{cases}
$$
Then $r \in \PP_\k$, and $r \forces \s \notin \dot T^\a_{\mu,n}$ and $\dot c_i \rest \ell = \s$, which means $r \forces \dot c_i \notin \tr{\dot T^\a_{\mu,n}}$. As $p$ was arbitrary, this shows that $c_i \notin \tr{T^\a_{\mu,n}}$ in $V^{\PP_\k}$.

Third, fix $\b < \a < \k$, and $m,n \in \w$. We claim that $\tr{T^\b_{\mu,m}} \cap \tr{T^\a_{\mu,n}} = \0$.
To see this, fix $p \in \PP_\k$: we will find an extension of $p$ forcing $\tr{T^\b_{\mu,m}} \cap \tr{T^\a_{\mu,n}} = \0$. 
Extending $p$ to $p \cup \{\< (\b,\mu,m),(\0,\0) \>,\< (\a,\mu,n),(\0,\0) \>\}$, if necessary, we may (and do) assume $(\b,\mu,m),(\a,\mu,n) \in \mathrm{dom}(p)$. 
Let $(t_\b,B_\b) = p(\b,\mu,m)$, where $t_\b$ is a pruned subtree of $2^{<k_\b}$ for some $k_\b \in \w$, and let $(t_\a,B_\a) = p(\a,\mu,n)$, where $t_\a$ is a pruned subtree of $2^{<k_\a}$ for some $k_\a \in \w$. 
From the definition of $\PP_\k$, we know that
$p \rest ((I \times \w) \cup (\a \times C \times \w)) \forces_{\PP_\a} \dot b \neq \dot a$ for every $\dot b \in B_\b$ and $\dot a \in B_\a$.
Using this, and the fact that $B_\b$ and $B_\a$ are both finite, there is an extension $q$ of $p$ in $\PP_\a$, and some $\ell > k_\b,k_\a$, such that $q$ ``decides'' all the $\dot b$ and $\dot a$ up to $\ell$, and in such a way that witnesses $\dot b \neq \dot a$ for all $\dot b \in B_\b$ and $\dot a \in B_\a$.
More precisely: for each $\dot b \in B_\b$, there is a particular branch $c(\dot b)$ of $2^{<\ell+1}$ such that $q \forces_{\PP_\a}$ ``$\dot b \rest \ell = c(\dot b)$''; and similarly, for each $\dot a \in B_\a$, there is a particular branch $c(\dot a)$ of $2^{<\ell+1}$ such that $q \forces_{\PP_\a}$ ``$\dot a \rest \ell = c(\dot a)$''; and furthermore, $c(\dot b) \rest \ell \neq c(\dot a) \rest \ell$ for all $\dot b \in B_\b$ and $\dot a \in B_\a$.
Let $t'_\b$ be the largest end extension of $t_\b$ that is a pruned subtree of $2^{<\ell}$, and let $t''_\b$ be the end extension of $t'_\b$ to a pruned subtree of $2^{<\ell+1}$ containing on level $\ell$ the nodes:
\begin{align*}
\s \cat 0 \quad & \text{ if } \s \in t'_\b, \s = c(\dot a) \text{ for some } \dot a \in B_\a, \text{ and } c(\dot a)(\ell) = 1  \\
\s \cat 1 \quad & \text{ if } \s \in t'_\b, \s = c(\dot a) \text{ for some } \dot a \in B_\a, \text{ and } c(\dot a)(\ell) = 0  \\
\s \cat 0 \quad & \text{ if } \s \in t'_\b \text{ and } \s \neq c(\dot a) \text{ for any } \dot a \in B_\a.
\end{align*}
Similarly, let $t'_\a$ be the largest end extension of $t_\a$ that is a pruned subtree of $2^{<\ell}$, and let $t''_\a$ be the end extension of $t'_\a$ to a pruned subtree of $2^{<\ell+1}$ containing on level $\ell$ the nodes:
\begin{align*}
\s \cat 0 \quad & \text{ if } \s \in t'_\a, \s = c(\dot b) \text{ for some } \dot b \in B_\b, \text{ and } c(\dot b)(\ell) = 1  \\
\s \cat 1 \quad & \text{ if } \s \in t'_\a, \s = c(\dot b) \text{ for some } \dot b \in B_\b, \text{ and } c(\dot b)(\ell) = 0  \\
\s \cat 1 \quad & \text{ if } \s \in t'_\a \text{ and } \s \neq c(\dot b) \text{ for any } \dot b \in B_\b.
\end{align*}
Observe that $t''_\b$ and $t''_\a$ have no common nodes on level $\ell$. 
Define a function $r$ on $(I \times \w) \cup (\k \times \C \times \w)$ by setting 
$$r(j)=
\begin{cases}
q(j) & \text{ if } j \in (I \times \w) \cup (\a \times \C \times \w) \text{ but } j \neq (\b,\mu,m),  \\
p(j) & \text{ if } j \in (I \times \w) \cup ((\k \setminus \a) \times \C \times \w)) \text{ but } j \neq (\a,\mu,n),  \\
(t''_\b,B_\b) & \text{ if } j = (\b,\mu,m)  \\
(t''_\a,B_\a) & \text{ if } j = (\a,\mu,n).
\end{cases}
$$
Then $r \in \PP_\k$, and $r \forces t''_\b$ is a subtree of $\dot T^\b_{\mu,m}$ and $r \forces t''_\a$ is a subtree of $\dot T^\a_{\mu,n}$. Because $t''_\b$ and $t''_\a$ share no nodes on level $\ell$, $r \forces \tr{\dot T^\b_{\mu,m}} \cap \tr{\dot T^\a_{\mu,n}} = \0$. As $p$ was arbitrary, this shows that $\tr{T^\b_{\mu,m}} \cap \tr{T^\a_{\mu,n}} = \0$ in $V^{\PP_\k}$.

Thus $\P$ is a pairwise disjoint collection of $F_\s$ subsets of $2^\w$ in $V^{\PP_\k}$. To finish the proof, we must show also that $\bigcup \P = 2^\w$.

Fix $x \in 2^\w$ in $V^{\PP_\k}$. Because $\PP_\k$ has the ccc, and each $\PP_\a$ is a complete sub-poset of $\PP_\k$, there is some $\a < \k$ such that $x \in V^{\PP_\a}$.
If $x = c_i$ for some $i \in I_\mu$, or if $x \in \tr{T^\xi_{\mu,n}}$ for some $\xi < \a$, then we're done. 
If not, let $\dot x$ be a nice $\PP_\a$-name for $x$, and fix some $p \in \PP_\k$ such that $p \forces$ ``$\dot x \neq \dot c_i$ for all $i \in I_\mu$, and $\dot x \notin \tr{\dot T^\xi_{\mu,n}}$ for all $\xi < \a$ and $n \in \w$." 
Because $p$ has finite support, there is some $N \in \w$ such that $(\a,\mu,N) \notin \mathrm{dom}(p)$. But then $q = p \cup \{ \< (\a,\mu,N),(\0,\{\dot x\}) \> \}$ is in $\PP_\a$, and $q \forces \dot x \in \tr{\dot T^\a_{\mu,N}}$. 
Thus for every condition $p$ forcing $x \notin \bigcup_{i \in I_\mu}\{c_i\} \cup \bigcup_{\xi < \a}\bigcup_{n \in \w}\tr{T^\xi_{\mu,n}}$, there is an extension $q$ of $p$ forcing $x \in \bigcup_{n \in \w}\tr{T^\a_{\mu,n}}$. Hence $x \in \bigcup_{i \in I_\mu}\{c_i\} \cup \bigcup_{\xi \leq \a}\bigcup_{n \in \w}\tr{T^\xi_{\mu,n}} \sub \bigcup \P$. As $x$ was arbitrary, $\bigcup \P = 2^\w$ as claimed.
\end{proof}

\begin{lemma}
$\continuum = \theta = \max(C)$ in $V^{\PP_\k}$.
\end{lemma}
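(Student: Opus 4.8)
The plan is to establish the two inequalities $\continuum \leq \theta$ and $\continuum \geq \theta$ separately in $V^{\PP_\k}$. For the upper bound, the key point is a nice-name counting argument: since $\PP_\k$ is ccc (Lemma~\ref{lem:ccc}), every real in $V^{\PP_\k}$ has a nice name, which is a countable sequence of countable antichains of $\PP_\k$. So it suffices to bound $|\PP_\k|$ by $\theta$, together with $\theta^{\aleph_0} = \theta$, which follows from \gch up to $\theta$ and $\cf(\theta) > \w$. To bound $|\PP_\k|$, I would argue by induction on $\a \leq \k$ that $|\PP_\a| \leq \theta$. A condition in $\PP_\a$ is a finite partial function on $(I \times \w) \cup (\a \times C \times \w)$; the domain has size $\leq \theta$ (since $|I| = \theta$, $|C| < \k \leq \theta$, and $\a \leq \k \leq \theta$), so there are $\leq \theta$ possible finite domains. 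The values on $I \times \w$ coordinates are just $0$ or $1$. The values on $(\g,\mu,n)$ coordinates are pairs $(t,B)$ where $t$ ranges over countably many finite trees and $B$ is a finite set of nice $\PP_\g$-names for members of $2^\w$; by the inductive hypothesis $|\PP_\g| \leq \theta$, so there are at most $\theta^{\aleph_0} = \theta$ nice $\PP_\g$-names, hence $\leq \theta$ choices for $B$. Multiplying finitely many such choices keeps us at $\leq \theta$, so $|\PP_\a| \leq \theta$, and at limit stages the union of $\leq \theta$ sets each of size $\leq \theta$ has size $\leq \theta$.

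For the lower bound $\continuum \geq \theta$, the cleanest route is to observe that $\PP_0 = \CC_\theta$ is a complete sub-poset of $\PP_\k$ (a special case of Lemma~\ref{lem:embedded} with $\b = 0$), so $V^{\PP_\k}$ contains the $\theta$ mutually generic Cohen reals $\set{c_i}{i \in I}$; since $|I| = \theta$ and distinct Cohen reals are distinct, this gives $\theta$ distinct reals in $V^{\PP_\k}$, hence $\continuum \geq \theta$. (Alternatively, one can note that $\mu \in \Spec$ for each $\mu \in C$ by Lemma~\ref{lem:Spec}, and $\Spec \subseteq [\aleph_1, \continuum]$, so in particular $\theta = \max(C) \leq \continuum$; but the Cohen-real argument is more direct and does not rely on Lemma~\ref{lem:Spec}.)

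Combining the two bounds gives $\continuum = \theta = \max(C)$ in $V^{\PP_\k}$, as desired.

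The main obstacle is bookkeeping in the nice-name count, specifically making the induction on $|\PP_\a| \leq \theta$ go through cleanly: one must be careful that a ``nice $\PP_\g$-name for a member of $2^\w$'' is determined by a countable sequence of countable (by ccc) antichains in $\PP_\g$, so that the number of such names is at most $|\PP_\g|^{\aleph_0} \leq \theta^{\aleph_0}$, and then invoke $\theta^{\aleph_0} = \theta$ (valid since \gch holds up to $\theta$ and $\cf(\theta) > \w$, the latter ensuring $\theta^{\aleph_0} = \theta$ rather than $\theta^+$). Everything else is routine cardinal arithmetic.
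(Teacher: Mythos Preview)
Your proposal is correct and follows essentially the same approach as the paper: a transfinite induction showing $|\PP_\a| \leq \theta$ via nice-name counting (using the ccc and $\theta^{\aleph_0} = \theta$) for the upper bound, and the Cohen reals added by $\PP_0$ for the lower bound. The paper's proof is slightly terser but structurally identical; your alternative lower-bound route via Lemma~\ref{lem:Spec} is a valid observation but, as you note, unnecessary.
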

\begin{proof}
A straightforward transfinite induction on $\a$ shows that $|\PP_\a| = \theta$ for all $\a \leq \k$. 
(For the base case, clearly $|\PP_0| = \theta$, and the limit case is also clear. For the successor case, suppose $|\PP_\a| = \theta$. Because $\PP_\a$ has the ccc, there are $\theta^{\aleph_0} = \theta$ nice $\PP_\a$-names for reals, and it follows that $|\PP_{\a+1}| = \theta$.)
In particular, $|\PP_\k| = \theta$, and because $\PP_\k$ has the ccc, it follows that there are $\theta^{\aleph_0} = \theta$ nice $\PP_\k$-names for reals. Hence $\continuum \leq \theta$ in $V^{\PP_\k}$.

On the other hand, it is obvious that $\continuum \geq \theta$ in $V^{\PP_\k}$, because of the Cohen reals added by $\PP_0$.
\end{proof}

\begin{lemma}
$\mathrm{cov}(\mathcal M) = \dom = \ser = \k = \min(C)$ in $V^{\PP_\k}$.
\end{lemma}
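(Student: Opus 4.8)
The plan is to establish, in $V^{\PP_\k}$, the chain
$$\k \,\leq\, \mathrm{cov}(\mathcal M) \,\leq\, \dom \,\leq\, \ser \,\leq\, \k,$$
which forces all four cardinals to equal $\k$. Three of these inequalities are essentially free. The inequality $\mathrm{cov}(\mathcal M) \leq \dom$ is part of the Cichoń diagram and holds in \zfc, and $\dom \leq \ser$ is Corollary~\ref{cor:serd}. For $\ser \leq \k$: by Lemma~\ref{lem:Spec}, $C \sub \Spec$ in $V^{\PP_\k}$, so $\k = \min(C) \in \Spec$, and hence $\ser = \min(\Spec) \leq \k$ by Corollary~\ref{cor:serd}. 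So the whole proof reduces to showing $\k \leq \mathrm{cov}(\mathcal M)$ in $V^{\PP_\k}$.

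For this I would invoke the classical principle that a finite support iteration of nontrivial forcings adds a Cohen real at any limit stage of countable cofinality. Note first that, despite its streamlined bookkeeping, $\PP_\k$ is a finite support iteration of length $\k$ over $\PP_0$, whose iterand at stage $\b$ is the finite support product over $C\times\w$ of the tree forcings, and that every such iterand is nontrivial: Lemma~\ref{lem:embedded} gives $\PP_\b \lessdot \PP_{\b+1}$, and a routine density argument shows no generic tree $\dot T^\b_{\mu,n}$ lies in $V^{\PP_\b}$. Since $\k$ is regular, $\a+\w < \k$ for every $\a<\k$, and the limit clause in the definition makes $\PP_{\a+\w}$ the direct limit of $\seq{\PP_{\a+i}}{i<\w}$; so, computed in $V^{\PP_\a}$, the quotient $\PP_{\a+\w}/\PP_\a$ is a finite support iteration of length $\w$ with nontrivial iterands, and therefore adds a Cohen real over $V^{\PP_\a}$. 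Thus, for each $\a<\k$, there is a Cohen real over $V^{\PP_\a}$ lying in $V^{\PP_{\a+\w}} \sub V^{\PP_\k}$.

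Now suppose, in $V^{\PP_\k}$, that $\{M_\xi : \xi<\lambda\}$ is a family of meager subsets of $2^\w$ with $\lambda<\k$, and for each $\xi$ fix a meager Borel set $\supseteq M_\xi$ coded by a real $r_\xi$. Since $\PP_\k$ has the ccc (Lemma~\ref{lem:ccc}), a nice name for $r_\xi$ uses only countably many conditions, each with finite domain; hence the set of ordinals $\g<\k$ occurring in those domains (through the block $\k\times C\times\w$) is countable, so --- $\k$ being regular uncountable --- bounded below $\k$ by some $\a_\xi$, and then Lemma~\ref{lem:embedded} gives $r_\xi \in V^{\PP_{\a_\xi}}$. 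As $\lambda < \mathrm{cf}(\k) = \k$, all the $r_\xi$ lie in $V^{\PP_\a}$ for $\a = \sup_{\xi<\lambda}\a_\xi < \k$. A Cohen real over $V^{\PP_\a}$ --- which exists in $V^{\PP_\k}$ by the previous paragraph --- avoids every meager Borel set coded in $V^{\PP_\a}$, hence avoids $\bigcup_{\xi<\lambda}M_\xi$. So no $\lambda<\k$ meager sets cover $2^\w$, equivalently $\R$; thus $\mathrm{cov}(\mathcal M) \geq \k$.

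The one point requiring care --- the hard part, such as it is --- is justifying that $\PP_\k$ may be treated as an honest finite support iteration, so that the Cohen-real-at-limit-stages phenomenon applies to the tails $\PP_{\a+\w}/\PP_\a$; this is precisely what Lemma~\ref{lem:embedded} delivers, since the explicit description of the conditions of $\PP_{\a+\w}$ (finite partial functions on $(I\times\w)\cup((\a+\w)\times C\times\w)$, with a direct limit at stage $\a+\w$) then exhibits the quotient over $V^{\PP_\a}$ as a length-$\w$ finite support iteration of nontrivial forcings. The remaining verifications --- that the generic trees are new, and the folklore fact that such iterations add Cohen reals --- are routine.
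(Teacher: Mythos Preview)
Your proof is correct and follows the same overall outline as the paper: establish the chain $\k \geq \ser \geq \dom \geq \mathrm{cov}(\mathcal M)$ from earlier results, then show $\mathrm{cov}(\mathcal M) \geq \k$ by producing, for each $\a < \k$, a real in $V^{\PP_\k}$ that is Cohen over $V^{\PP_\a}$, and using ccc plus regularity of $\k$ to absorb any $<\!\k$-sized family of meager sets into a single $V^{\PP_\a}$.

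The one genuine difference is in how the Cohen real is obtained. You invoke the folklore fact that a length-$\w$ finite support iteration of nontrivial posets adds a Cohen real, applied to the quotient $\PP_{\a+\w}/\PP_\a$; this is fine once you have argued (as you do, via Lemma~\ref{lem:embedded} and the direct-limit clause) that $\PP_\k$ really behaves as a finite support iteration. The paper instead writes down an explicit Cohen real already in $V^{\PP_{\a+1}}$: it sets $c(n)=0$ iff $\langle 0\rangle \in T^\a_{\k,n}$, i.e.\ it reads off one bit from each of the countably many generic trees added at stage $\a$ for the coordinate $\mu=\k$, and verifies directly by a density argument that this $c$ meets every dense open set coded in $V^{\PP_\a}$. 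The paper's approach is more self-contained and avoids any discussion of quotient iterations; yours is shorter once the folklore is granted, and has the advantage of not needing to single out a particular $\mu\in C$. Either route is perfectly adequate here.
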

\begin{proof}
By Lemma~\ref{lem:Spec} and Corollary~\ref{cor:serd}, $\k \geq \ser \geq \dom \geq \mathrm{cov}(\mathcal M)$ in $V^{\PP_\k}$. So to prove the lemma, it suffices to show that $2^\w$ cannot be covered with $<\!\k$ meager sets in $V^{\PP_\k}$.

Suppose $\lambda < \k$ and $\set{C_\xi}{\xi < \lambda}$ is a collection of closed nowhere dense subsets of $2^\w$ in $V^{\PP_\k}$. For each $C_\xi$, let $A_\xi$ be a Borel code that evaluates to $C_\xi$. 
Because $\PP_\k$ has the ccc and each $\PP_\a$ is a complete sub-poset of $\PP_\k$, there is for each $\xi$ some $\a < \k$ such that $A_\xi \in V^{\PP_\a}$.
Because $\k$ is regular and $\lambda < \k$, there is some particular $\a < \k$ such that $A_\xi \in V^{\PP_\a}$ for all $\xi < \lambda$.
So to prove the lemma, it suffices to show that for each $\a < \k$ there is some $c \in V^{\PP_\k}$ not contained in any closed nowhere dense subset of $2^\w$ coded in $V^{\PP_\a}$ (i.e., we want $c$ to be Cohen-generic over $V^{\PP_\a}$).

Fix $\a < \k$, and define $c: \w \to 2$ by setting
$$c(n) = 0 \quad \text{if and only if} \quad \langle 0 \rangle \in T^\a_{\k,n}.$$
We claim that $c$ is Cohen-generic over $V^{\PP_\a}$. 
To see this, let $U \sub 2^\w$ be a dense open set whose Borel code is in $V^{\PP_\a}$. In particular, $\tilde U = \set{\t \in 2^{<\w}}{\tr{\t} \sub U} \in V^{\PP_\a}$ (where, abusing notation slightly, $\tr{\t}$ denotes all those $x$ in $2^\w$ with $x \rest \mathrm{dom}(\t) = \t$). 
Now fix $p \in \PP_\k$. We will find an extension $r$ of $p$ forcing that $c \in U$.
Extending $p$ if necessary, we may (and do) assume that there is some $N \in \N$ such that $(\a,\mu,n) \in \mathrm{dom}(p)$ for all $n < N$, but $(\a,\mu,n) \notin \mathrm{dom}(p)$ for all $n \geq N$. For all $n < N$, let $p(\a,\mu,n) = (t_n,B_n)$. Define $\s \in 2^N$ by setting
$$\s(n) = 0 \quad \text{if and only if} \quad \langle 0 \rangle \in t_n$$
for all $n < N$. Because $U$ is dense in $2^\w$, there is some $\t \in \tilde U$ such that $\t \rest N = \s$. 
Because $\tilde U \in V^{\PP_\a}$, there is some $q \in \PP_\a$ with $q \leq_{\PP_\a} p \rest \a$ forcing that $\t \in \tilde U$.
Define $r \in \PP_\k$ by
$$r(j) = 
\begin{cases}
r(j) & \text{if } j \in (I \times \w) \cup (\a \times C \times \w),  \\
(\<\t(n)\>,\0) & \text{if } j = (\a,\mu,n) \text{ for some } n \in \mathrm{dom}(\t) \setminus \mathrm{dom}(\s), \\
p(j) & \text{otherwise.}
\end{cases}$$
Then $r \in \PP_\k$, and $r$ forces $c \in U$. As $U$ was an arbitrary open dense subset of $2^\w$ coded in $V^{\PP_\a}$, this shows $c$ is Cohen-generic over $V^{\PP_\a}$.
\end{proof}

From the last two lemmas, it follows that $\Spec \sub [\k,\theta]$ in $V^{\PP_\k}$, and if $\lambda > \theta$ then $\lambda \notin \bspec$. To finish proving the theorem, it remains only to show that if $\lambda \in [\k,\theta]$ and $\lambda \notin C$, then $\lambda \notin \bspec$. This is where the isomorphism-of-names argument comes in. This argument requires a rich supply of automorphisms of $\PP_\k$, which we describe next.

If $\phi: I \to I$ is a permutation, then for every set $x$ we define $\bar \phi(x)$ to be the set obtained from $x$ by replacing every $i \in I$ in the transitive closure of $x$ with $\phi(i)$. This is well defined, because if $i,j \in I$ then $i$ is not in the transitive closure of $j$. (One may think of $I$ as a set of urelements, $\phi$ as a permutation of them, and $\bar \phi$ as the automorphism of the universe induced by $\phi$.) Alternatively, $\bar \phi$ is described via well-founded recursion by the relation
$$\bar \phi(x) = \set{\bar \phi(y)}{y \in x}.$$
In particular, if $p \in \PP_\k$, then 
$$\bar \phi(p)(j) = 
\begin{cases}
p(i) & \text{if } i \in (I \times \w) \cap \mathrm{dom}(p) \text{ and } \phi(i) = j,  \\
\big(t,\{ \bar \phi(\dot b) :\, \dot b \in B \}\big) & \text{if } j \in \mathrm{dom}(p) \setminus (I \times \w) \text{ and } p(j) = (t,B),
\end{cases}$$
and this expression can be taken as a recursive definition of $\bar \phi$ on $\PP_\k$, and the natural extension of $\bar \phi$ to the class of $\PP_\k$-names.

\begin{lemma}\label{lem:permutations}
Let $\phi$ be a permutation of $I$ such that $\phi \rest I_\mu$ is a permutation of $I_\mu$ for all $\mu \in C$. 
Then $\bar \phi \rest \PP_\a$ is an automorphism of $\PP_\a$ for all $\a \leq \k$.
\end{lemma}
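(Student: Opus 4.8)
The plan is to prove the lemma by transfinite induction on $\a \leq \k$, noting first that $\phi^{-1}$ also restricts to a permutation of each $I_\mu$, so that the inductive hypothesis will be available at every stage for $\phi$ and for $\phi^{-1}$ alike. Before the induction I would record two facts that hold outright: since $\w$, $2$, all ordinals, and all members of $C$ have transitive closures disjoint from $I$, unwinding the recursive description of $\bar\phi$ shows $\bar\phi(\dot c_i) = \dot c_{\phi(i)}$ for every $i \in I$ and $\bar\phi(t) = t$ for every subtree $t$ of any $2^{<k}$ (and, similarly, $\bar\phi(\check x) = \check x$ whenever $x$ has no member of $I$ in its transitive closure). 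A third fact is used only in the successor step: once the inductive hypothesis guarantees $\bar\phi$ is an order-automorphism of $\PP_\g$, it follows that $\bar\phi$ maps antichains of $\PP_\g$ to antichains of $\PP_\g$ — hence carries nice $\PP_\g$-names for members of $2^\w$ to such names — and, since $\bar\phi$ fixes each index $(\xi,\mu,m)$ and all finite binary sequences, that $\bar\phi(\dot T^\xi_{\mu,m}) = \dot T^\xi_{\mu,m}$ for all $\xi < \g$ and all $m$.

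The base case $\a = 0$ is immediate: $\PP_0$ is the poset of finite partial functions $I \times \w \to 2$, and $\bar\phi(p) = p \circ (\phi^{-1} \times \mathrm{id})$ is visibly a domain- and value-respecting permutation of $\PP_0$. The limit case is routine as well: conditions have finite domain, so $\PP_\a = \bigcup_{\xi < \a} \PP_\xi$ with compatible orderings, and $\bar\phi \rest \PP_\a = \bigcup_{\xi < \a}(\bar\phi \rest \PP_\xi)$ is then a union of automorphisms of an increasing chain of subposets, hence an automorphism of $\PP_\a$.

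The successor case $\a = \b + 1$ is the heart of the argument, and the step I expect to be the main obstacle is showing $\bar\phi(p) \in \PP_\a$ for each $p \in \PP_\a$. Since $\bar\phi$ fixes every index in $\a \times C \times \w$ and permutes $I \times \w$, the domain of $\bar\phi(p)$ agrees with that of $p$ off $I \times \w$, the values on $I \times \w$ stay in $\{0,1\}$, and if $p(\g,\mu,n) = (t,B)$ then $\bar\phi(p)(\g,\mu,n) = \big(t,\, \{\bar\phi(\dot b) : \dot b \in B\}\big)$, whose tree part is still the pruned tree $t$ and whose name part is a finite set of nice $\PP_\g$-names for members of $2^\w$ by the preliminary remarks. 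It remains to verify the three forcing clauses for each $\dot b' := \bar\phi(\dot b)$, the relevant side condition being $\bar\phi(p) \rest ((I \times \w) \cup (\g \times C \times \w)) = \bar\phi(q)$ where $q := p \rest ((I \times \w) \cup (\g \times C \times \w)) \in \PP_\g$. Here I would apply the symmetry lemma for forcing to the automorphism $\bar\phi \rest \PP_\g$ (available by the inductive hypothesis): $q \forces_{\PP_\g} \psi(\dot x_1,\dots,\dot x_\ell)$ iff $\bar\phi(q) \forces_{\PP_\g} \psi(\bar\phi(\dot x_1),\dots,\bar\phi(\dot x_\ell))$ for any formula $\psi$. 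Applying this to the conjunction of the three clauses satisfied by $\dot b$, and using the facts collected above: ``$\dot b \rest j \in t$ for all $j < k$'' becomes the same clause for $\dot b'$; ``$\dot b$ is not a branch of $\dot T^\xi_{\mu,m}$ for $\xi < \g$, $m \in \w$'' becomes the same clause for $\dot b'$ because those names are $\bar\phi$-fixed; and ``$\dot b \neq \dot c_i$ for all $i \in I_\mu$'' becomes ``$\dot b' \neq \dot c_{\phi(i)}$ for all $i \in I_\mu$,'' which says precisely ``$\dot b' \neq \dot c_j$ for all $j \in I_\mu$'' \emph{because} $\phi$ restricts to a permutation of $I_\mu$ --- the sole place where that hypothesis on $\phi$ enters. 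Hence $\bar\phi(p) \in \PP_\a$.

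Finally I would assemble the conclusion. The same argument run with $\phi^{-1}$ in place of $\phi$ shows $\overline{\phi^{-1}}$ also maps $\PP_\a$ into $\PP_\a$; since $\overline{\phi^{-1}} \circ \bar\phi$ and $\bar\phi \circ \overline{\phi^{-1}}$ act as the identity on all $\PP_\k$-names, $\bar\phi \rest \PP_\a$ is a bijection of $\PP_\a$ onto itself. And order-preservation in both directions is a direct unwinding of the definition of $\leq_{\PP_\a}$: $\bar\phi$ respects inclusion of domains, inclusion of functions on $I \times \w$, inclusion of the finite name-sets $B$ (it is injective on names), and end-extension of the trees $t$ (which it fixes), while the converse implication follows by applying the forward one to $\overline{\phi^{-1}}$. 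Thus $\bar\phi \rest \PP_\a$ is an order-automorphism of $\PP_\a$, completing the induction.
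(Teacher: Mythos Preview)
Your proposal is correct and follows essentially the same approach as the paper: a transfinite induction on $\a$ (simultaneously for $\phi$ and $\phi^{-1}$), with the successor step verified by applying the symmetry lemma for the inductive automorphism $\bar\phi \rest \PP_\g$ to the three forcing clauses, using $\bar\phi(\dot c_i) = \dot c_{\phi(i)}$ together with the hypothesis that $\phi$ permutes $I_\mu$, and the invariance $\bar\phi(\dot T^\xi_{\mu,m}) = \dot T^\xi_{\mu,m}$. The only organizational difference is that you front-load the auxiliary facts about $\dot c_i$, finite trees, and $\dot T^\xi_{\mu,m}$, whereas the paper derives them inline during the successor step.
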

\begin{proof}
First, note that for all $p,q \in \PP_\a$, we have
$$\<p,q\> \in \ \leq_{\PP_\a} \quad \text{if and only if } \quad \bar \phi(\<p,q\>) = \<\bar \phi(p),\bar \phi(q)\> \in \bar \phi(\leq_{\PP_\a}).$$
Together with the fact that $\bar \phi$ is invertible (with inverse $\bar \phi^{-1} = \overline{\phi^{-1}}$), this shows that the image of $\PP_\a$ under $\bar \phi$ is a poset, and is naturally isomorphic to $\PP_\a$ as witnessed by $\bar \phi$.
But it is not immediately clear that the image of $\PP_\a$ under $\bar \phi$ is equal to $\PP_\a$, which is of course required for our claim that $\bar \phi \rest \PP_\a$ is an automorphism of $\PP_\a$. (In fact, one may show that this would not be true for $\a \geq 1$ if $\phi$ were not required to fix all the $I_\mu$.)
This is proved for $\bar \phi$ and its inverse, simultaneously, by induction on $\a$. 

The base case $\a=0$ is clear. And for limit $\a$, if $\bar \phi \rest \PP_\xi$ is an automorphism of $\PP_\xi$, for all $\xi < \a$, then $\bar \phi \rest \PP_\a$ is an automorphism of $\PP_\a$ because $\< \PP_\a,\leq_{\PP_\a} \>$ is the direct limit of $\seq{\< \PP_\xi,\leq_{\PP_\xi} \>}{\xi < \a}$. The same applies to $\bar \phi^{-1}$.

For the successor case, fix some $\b < \k$ and suppose $\bar \phi \rest \PP_\xi$ is an automorphism of $\PP_\xi$ for all $\xi \leq \b$ (the inductive hypothesis). Let $\a = \b+1$, and fix $p \in \PP_\a$. We claim that $\bar \phi(p) \in \PP_\a$. Recall that
$$\bar \phi(p)(j) = 
\begin{cases}
p(i) & \text{if } i \in (I \times \w) \cap \mathrm{dom}(p) \text{ and } \phi(i) = j,  \\
\big(t,\{ \bar \phi(\dot b) :\, \dot b \in B \}\big) & \text{if } j \in \mathrm{dom}(p) \setminus (I \times \w) \text{ and } p(j) = (t,B).
\end{cases}$$

It is clear that $\bar \phi(p)$ is a finite partial function on $(I \times \w) \cup (\a \times C \times \w)$, and that if $i \in (I \times \w) \cap \mathrm{dom}(\bar \phi(p))$, then $\bar \phi(p)(i) = p(\phi^{-1}(i)) \in \{0,1\}$.
It remains to check that the last bullet point in the definition of the conditions in $\PP_\a$ is satisfied by $\bar \phi(p)$.

Suppose $(\g,\mu,n) \in \mathrm{dom}(\bar \phi(p))$ for some $\g \leq \b$, $\mu \in C$, and $n \in \w$.
This means $(\g,\mu,n) \in \mathrm{dom}(p)$ as well: let us denote $p(\g,\mu,n) = (t,B)$. 
Then 
$$\bar \phi(p)(\g,\mu,n) = \big( t, \{ \bar \phi(\dot b) :\, \dot b \in B \} \big).$$
Because $p \in \PP_\a$, $t$ is a pruned subtree of $2^{<k}$ for some $k \in \w$, and (by the inductive hypothesis, that $\bar \phi \rest \PP_\g$ is an automorphism of $\PP_\g$), $\{ \bar \phi(\dot b) :\, \dot b \in B \}$ is a finite set of nice $\PP_\g$-names for members of $2^\w$. Furthermore, by applying the automorphism $\bar\phi$ of $\PP_\g$ to the displayed statement in the last bullet point in the definition of the conditions in $\PP_\a$, we get that for every $\dot b \in B$,
\begin{align*}
\bar \phi(p) \restriction ((I \times \w) \cup (\g \times C \times \w)) \ \forces_{\PP_\g} \quad  & \bar \phi(\dot b) \rest j \in t \text{ for every } j < k,  \\
& \bar \phi(\dot b) \neq \bar \phi(\dot c_i) \text{ for every } i \in I_\mu, \text{ and}  \\
& \bar \phi(\dot b) \text{ is not a branch of } \bar \phi(\dot T^\xi_{\mu,m}) \\
& \quad \text{ for any } \xi < \g \text{ and } m \in \w.
\end{align*}

Considering the second of these three statements, note that $\bar \phi(\dot c_i) = \dot c_{\phi(i)}$ for every $i \in I$. Because $\phi \rest I_\mu$ is a permutation of $I_\mu$, this means that
the assertion ``$\bar \phi(\dot b) \neq \bar \phi(\dot c_i) \text{ for every } i \in I_\mu$'' is equivalent to the assertion ``$\bar \phi(\dot b) \neq \dot c_i \text{ for every } i \in I_\mu$.'' Thus
\begin{align*}
\bar \phi(p) \restriction ((I \times \w) \cup (\g \times C \times \w)) \ \forces_{\PP_\g} \quad & \bar \phi(\dot b) \neq \dot c_i \text{ for every } i \in I_\mu.
\end{align*}
Considering the third of these three statements, observe that
\begin{align*}
\bar \phi(\dot T^\xi_{\mu,n}) &= \set{\bar \phi( \langle \s, p \rangle )}{p(\xi,\mu,m) = (t,B) \text{ for some } B \text{, and } \s \in t}  \\
& = \set{ \langle \s, \bar \phi(p) \rangle }{p(\xi,\mu,m) = (t,B) \text{ for some } B \text{, and } \s \in t}  \\
& = \set{ \langle \s, \bar \phi(p) \rangle }{\bar \phi(p)(\xi,\mu,m) = (t,\bar \phi(B)) \text{ for some } B \text{, and } \s \in t} \\
& = \set{ \langle \s, \bar \phi(p) \rangle }{\bar \phi(p)(\xi,\mu,m) = (t,B') \text{ for some } B' \text{, and } \s \in t} \\
& = \set{ \langle \s, q \rangle }{q(\xi,\mu,m) = (t,B') \text{ for some } B' \text{, and } \s \in t} \\
& = \dot T^\xi_{\mu,n}.
\end{align*}
The third equality is true because for any condition $p$, $p(\xi,\mu,m) = (t,B)$ if and only if $\bar \phi(p)(\xi,\mu,m) = (t,\bar \phi(B))$.
The fourth equality uses the inductive hypothesis, that $\bar \phi \rest \PP_\g$ is an automorphism: $p(\xi,\mu,m) = (t,\bar \phi(B))$ for some $B$ if and only if $p(\xi,\mu,m) = (t,B')$ for some $B'$, specifically for $B' = \bar \phi^{-1}(B)$. The fifth equality also uses the fact that $\bar \phi \rest \PP_\g$ is an automorphism.
Thus
\begin{align*}
\bar \phi(p) \restriction ((I \times \w) \cup (\g \times C \times \w)) \ \forces_{\PP_\g} \quad 
& \bar \phi(\dot b) \text{ is not a branch of } \dot T^\xi_{\mu,m} \\
& \quad \text{ for any } \xi < \g \text{ and } m \in \w.
\end{align*}

Putting these together, we obtain
\begin{align*}
\bar \phi(p) \restriction ((I \times \w) \cup (\g \times C \times \w)) \ \forces_{\PP_\g} \quad  & \bar \phi(\dot b) \rest j \in t \text{ for every } j < k,  \\
& \bar \phi(\dot b) \neq c_i \text{ for every } i \in I_\mu, \text{ and}  \\
& \bar \phi(\dot b) \text{ is not a branch of } \dot T^\xi_{\mu,m} \\
& \quad \text{ for any } \xi < \b \text{ and } m \in \w.
\end{align*}
In other words, $\bar \phi(p)$ satisfies the final bullet point in the definition of the $\PP_\a$ conditions. Hence $\bar \phi(p) \in \PP_\a$, as claimed.

This shows that (the restriction of) $\bar \phi$ is an injective morphism from $\PP_\a$ to $\PP_\a$. To get surjectivity, simply note that the same argument applies to $\bar \phi^{-1} = \overline{\phi^{-1}}$ as well. 
\end{proof}

From now on, we work in the ground model.
Let $\lambda$ be a cardinal such that $\lambda \in [\k,\theta]$ and $\lambda \notin C$.
Suppose $\big\{ \dot B_\a :\, \a < \lambda \big\}$ is a set of nice $\PP_\k$-names for Borel codes for subsets of $\R$. (Any standard method of constructing Borel codes can be used for the proof, provided only that the codes are hereditarily countable sets. But for concreteness, let us take a Borel code to be a subset of $\w$.) We let $B_\a$ denote the evaluation of the name $\dot B_\a$ in $V^{\PP_\k}$, and we let $\tilde B_\a \sub \R$ denote the interpretation of $B_\a$. 

We aim to show that $\big\{ \tilde B_\a :\, \a < \lambda \big\}$ is not a partition of $\R$ in $V^{\PP_\k}$.
To this end, suppose $q \in \PP_\k$ and $q$ forces each of the $\tilde B_\a$ is nonempty, and $\tilde B_\a \cap \tilde B_\b = \0$ whenever $\a \neq \b$.
We will show that $q$ also forces $\bigcup_{\a < \lambda} \tilde B_\a \neq \R$.

Fix a cardinal $\nu \leq \lambda$ such that $\a^{\aleph_0} < \nu$ for all cardinals $\a < \nu$, and such that $C$ contains no cardinals in the interval $[\nu,\lambda]$. If $\lambda$ is neither singular nor the successor of a singular cardinal, then we may simply take $\lambda = \nu$. Otherwise, using the last two bullet points in our description of $C$, there is an infinite interval of cardinals below $\lambda$ and disjoint from $C$, and we may take $\nu$ to be any successor-of-a-successor cardinal in this interval. In either case, $\a^{\aleph_0} < \nu$ for all cardinals $\a < \nu$ by the \gch.

Given $\a \leq \k$ and a condition $p \in \PP_\a$, for each $\mu \in C$ define 
$$\mathrm{hdom}_\mu(p) = \textstyle \text{TC}(p) \cap I_\mu,$$
where $\text{TC}(p)$ denotes the transitive closure of $p$.
We think of $\mathrm{hdom}_\mu(p)$ as the ``hereditary domain'' of $p$ on $I_\mu$: all those $i \in I_\mu$ that are used at any stage in building the condition $p$. 
A straightforward transfinite induction on $\a$ shows that $\card{\mathrm{hdom}_\mu(p)} \leq \aleph_0$ for every $p \in \PP_\a$ and every $\mu \in C$. 
(The base case and the limit case are clear. For the successor case, use the fact that $\PP_\a$ has the ccc.)
Similarly, for each $\PP_\k$-name $\dot x$ and each $\mu \in C$, let 
$\mathrm{hdom}_\mu(\dot x) = \textstyle \text{TC}(\dot x) \cap I_\mu$.
As before, it is not difficult to see that if $\dot x$ is a nice $\PP_\k$-name for a Borel code (or for any hereditarily countable set), then $\mathrm{hdom}_\mu(\dot x)$ is countable for every $\mu \in C$.

For each $\a < \lambda$ and each $\mu \in C$, let 
$D^\a_\mu = \mathrm{hdom}_\mu(\dot B_\a).$
Note that $D^\a_\mu$ is countable for each $\a < \lambda$ and $\mu \in C$.
Expanding some of the $D^\a_\mu$ if necessary, we may (and do) assume each $D^\a_\mu$ is countably infinite and includes $\mathrm{hdom}_\mu(q)$.
For each $\a < \lambda$, let $D^\a = \bigcup_{\mu \in C}D^\a_\mu$. 
We note that $|D^\a| = \sum_{\mu \in C}|D^\a_\mu| = |C| \cdot \aleph_0$ for all $\a < \lambda$, and $|C| \cdot \aleph_0 < \min (C) = \k$. (Note: The inequality $|C| \cdot \aleph_0 < \min(C)$ uses the second bullet point in our description of $C$.)

By our choice of $\nu$, together with the aforementioned fact that $|D^\a| < \k$ for all $\a < \lambda$, $\set{D^\a}{\a < \nu}$ meets the conditions of the generalized $\Delta$-system lemma \cite[Lemma III.6.15]{Kunen}.
Thus there is some $\A_0 \sub \nu$ with $\card{\A_0} = \nu$ such that $\set{D^\a}{\a \in \A_0}$ is a $\Delta$-system with root $R$. 

Let
$\A_1 = \set{\a \in \A_0}{ D^\a_\mu \setminus R = \0 \text{ for all } \mu \in C \text{ with } \mu < \lambda}$.
For all $\mu \in C$, $\set{D^\a_\mu \setminus R}{\a \in \A_0}$ is a $\nu$-size collection of pairwise disjoint subsets of $I_\mu$. 
If also $\mu < \lambda$, then $\mu < \nu$ and it follows that $D^\a_\mu \setminus R = \0$ for all but (at most) $\mu$ members of $\A_0$. 
Furthermore, $|\set{\mu \in C}{\mu < \lambda}| \leq |C| < \k < \nu$. It follows that $\card{\A_1} = \nu$. 

For each $\a < \lambda$ and $\mu \in C$, fix a bijection $\phi^\a_\mu: D^\a_\mu \to \w$, in such a way that $\phi^\a_\mu \rest R = \phi^\b_\mu \rest R$ for all $\a,\b < \lambda$. 
For each $\a,\b < \lambda$ and $\mu \in C$, let $\phi^{\a,\b}_\mu$ be the involution of $I_\mu$ given by
$$\phi^{\a,\b}_\mu(i) = 
\begin{cases}
(\phi^\b_\mu)^{-1} \circ \phi^\a_\mu(i) \quad & \text{if } i \in D^\a_\mu \\ 
(\phi^\a_\mu)^{-1} \circ \phi^\b_\mu(i) & \text{if } i \in D^\b_\mu, \\
i &\text{otherwise.}
\end{cases}$$
This function is well defined, because $D^\a_\mu \cap D^\b_\mu = R$, and $(\phi^\b_\mu)^{-1} \circ \phi^\a_\mu(i) = i = (\phi^\a_\mu)^{-1} \circ \phi^\b_\mu(i)$ whenever $i \in R$.
This gives us a collection $\set{\phi^{\a,\b}_\mu}{\a,\b < \lambda}$ of involutions of $I_\mu$ such that for any $\a,\b < \lambda$,
\begin{itemize}
\item[$\circ$] $\phi^{\a,\b}_\mu$ maps $D^\a_\mu$ onto $D^\b_\mu$ and $D^\b_\mu$ onto $D^\a_\mu$, but acts as the identity on the rest of $I_\mu$,
\item[$\circ$] $\phi^{\a,\b}_\mu$ acts as the identity on $R \cap I_\mu$, and
\item[$\circ$] $\phi^{\a,\b}_\mu = \phi^{\g,\b}_\mu \circ \phi^{\a,\g}_\mu$ for any $\g < \lambda$.
\end{itemize}
For each $\a,\b < \lambda$, let $\phi^{\a,\b}$ denote the product map $\bigotimes_{\mu \in C}\phi^{\a,\b}_\mu$ (that is, the map defined by setting $\phi^{\a,\b}(i) = \phi^{\a,\b}_\mu(i)$ whenever $i \in I_\mu$).
This is an involution of $I$, and restricts to $\phi^{\a,\b}_\mu$ on each $I_\mu$. In particular, Lemma~\ref{lem:permutations} applies, and $\bar \phi^{\a,\b} \rest \PP_\k$ is an automorphism of $\PP_\k$ for each $\a,\b < \lambda$.

\begin{lemma}
There are at most $\k$ nice $\PP_\k$-names $\dot x$ for subsets of $\w$ with the property that $\mathrm{hdom}(\dot x) \sub D^0$.
\end{lemma}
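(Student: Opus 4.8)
The plan is to prove, simultaneously by transfinite induction on $\a \leq \k$, that (i) the set $S_\a = \set{p \in \PP_\a}{\text{TC}(p) \cap I \sub D^0}$ has size at most $\k$, and (ii) the set $\mathcal{N}_\a$ of nice $\PP_\a$-names $\dot x$ for subsets of $\w$ with $\mathrm{hdom}(\dot x) = \text{TC}(\dot x) \cap I \sub D^0$ has size at most $\k$; statement (ii) at $\a = \k$ is exactly the lemma. The first thing to record is the uniform reduction of (ii) to (i): a nice $\PP_\a$-name for a subset of $\w$ has the form $\bigcup_{n \in \w}(\{n\} \times A_n)$ with each $A_n$ an antichain in $\PP_\a$, hence countable since $\PP_\a$ has the ccc (Lemmas~\ref{lem:embedded} and~\ref{lem:ccc}); and if $\mathrm{hdom}(\dot x) \sub D^0$, then any $p$ occurring in $\dot x$ satisfies $\text{TC}(p) \sub \text{TC}(\dot x)$ and so lies in $S_\a$. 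Thus $\dot x$ is coded by a sequence $\seq{A_n}{n \in \w}$ of countable subsets of $S_\a$, and $\card{\mathcal{N}_\a} \leq (\card{S_\a}^{\aleph_0})^{\aleph_0} = \card{S_\a}^{\aleph_0}$. The cardinal arithmetic I would use, all flowing from \gch up to $\theta$ together with $\k$ being regular and uncountable and $\card{C} < \k$, is: $\card{D^0} = \card{C} \cdot \aleph_0 < \k$; $\card{[\k \times C \times \w]^{<\w}} = \k$ (the size of $I$ is irrelevant, since restricting to $D^0$ confines the $I$-part of any domain to $D^0 \times \w$); and $\dlt^{\aleph_0} \leq \k$ for every $\dlt \leq \k$ (for $\aleph_1 \leq \dlt < \k$ this is $\dlt^{\aleph_0} \leq \dlt^+ \leq \k$, for $\dlt = \aleph_0$ it is $2^{\aleph_0} = \aleph_1$, and $\k^{\aleph_0} = \k$ by regularity).

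With these in hand I would run the induction. At $\a = 0$, $S_0$ is the set of finite partial functions $D^0 \times \w \to 2$, so $\card{S_0} = \card{D^0} < \k$ and $\card{\mathcal{N}_0} \leq \card{D^0}^{\aleph_0} \leq \k$. At a limit $\a$, $S_\a = \bigcup_{\xi < \a}S_\xi$, so $\card{S_\a} \leq \card{\a} \cdot \k = \k$ and $\card{\mathcal{N}_\a} \leq \k^{\aleph_0} = \k$. The successor step $\a = \b+1$ is the one that does any work, and it is a finite bookkeeping count: a condition $p \in S_{\b+1}$ is determined by (a) the trace of $\mathrm{dom}(p)$ on $\k \times C \times \w$, which is a finite set, of which there are $\k$; (b) the finite partial function $p \rest (I \times \w)$ into $\{0,1\}$, whose domain lies in $D^0 \times \w$ since every urelement in it belongs to $\text{TC}(p) \cap I \sub D^0$, of which there are fewer than $\k$; and (c) for each of the finitely many triples $(\g,\mu,n)$ appearing in (a), the value $p(\g,\mu,n) = (t,B)$, with $t$ one of countably many finite trees and $B$ a finite subset of $\mathcal{N}_\g$ --- indeed $\dot b \in B$ forces $\text{TC}(\dot b) \sub \text{TC}(p)$, so $\dot b \in \mathcal{N}_\g$. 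By the inductive hypothesis $\card{\mathcal{N}_\g} \leq \k$ for each $\g \leq \b$, so each value in (c) has at most $\k$ possibilities, and since there are finitely many triples, the whole assignment has at most $\k$ possibilities. Multiplying the three counts gives $\card{S_{\b+1}} \leq \k$, whence $\card{\mathcal{N}_{\b+1}} \leq \k^{\aleph_0} = \k$.

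I do not expect a genuine obstacle, only two points of care. First, the induction must carry (i) and (ii) together: the conditions counted in $S_{\b+1}$ refer, through their $B$-components, to nice names over the earlier posets $\PP_\g$ with $\g \leq \b$, so the count of $S_{\b+1}$ leans on the count of the $\mathcal{N}_\g$; here one uses the convention (already in force in the excerpt) that a nice $\PP_\g$-name is also a nice $\PP_\b$-name with the same $\mathrm{hdom}$, which is legitimate because incompatibility is preserved upward along $\PP_\g \sub \PP_\b$ as in the proof of Lemma~\ref{lem:embedded}. Second, one should pinpoint where \gch is used, namely the fact $\dlt^{\aleph_0} \leq \k$ for $\dlt < \k$: this is the only place in this lemma where the hypothesis that \gch holds up to $\theta$ is needed, and without it the statement can fail.
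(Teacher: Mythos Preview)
Your proposal is correct and follows essentially the same approach as the paper: a simultaneous transfinite induction on $\a \leq \k$ bounding both the number of conditions in $\PP_\a$ with hereditary domain contained in $D^0$ and the number of nice $\PP_\a$-names for subsets of $\w$ with the same restriction, using the ccc to reduce the latter to a $\k^{\aleph_0}$ count. Your write-up is in fact more careful than the paper's about the cardinal arithmetic (the paper asserts ``$D^0$ is countable,'' which is only true when $|C| \leq \aleph_0$, whereas your bound $|D^0| = |C|\cdot\aleph_0 < \k$ is the correct general statement), and you correctly isolate where \gch and the regularity of $\k$ enter.
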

\begin{proof}
For each $\a \leq \k$, let $\PP_\a \rest D^0 = \set{p \in \PP_\a}{\mathrm{hdom}(p) \sub D^0}$.
We prove by transfinite induction on $\a$ that for all $\a \leq \k$, $\card{\PP_\a \rest D^0} \leq \k$ and there are $\leq\! \k$ nice $\PP_\a \rest D^0$-names for subsets of $\w$. Note that a nice $\PP_\a \rest D^0$-name for a subset of $\w$ is the same thing as a nice $\PP_\a$-name $\dot x$ for a subset of $\w$ with the property that $\mathrm{hdom}(\dot x) \sub D^0$.

For the base case, $\PP_0 \rest D^0$ is just the set of finite partial functions $D^0 \to 2$, so $\card{\PP_\a \rest D^0} = \aleph_0 \leq \k$ because $D^0$ is countable. Because $\PP_0 \rest D^0$ has the ccc, there are $\aleph_0$ nice $\PP_0 \rest D^0$-names for subsets of $\w$.

For the other cases, fix $\a \leq \k$, and suppose $\card{\PP_\xi \rest D^0} \leq \k$ and there are $\leq\! \k$ nice $\PP_\xi \rest D^0$-names for subsets of $\w$, for all $\xi < \a$. 
A condition in $\PP^\a \rest D^0$ is defined in the same way as a condition in $\PP^\a$, except that we restrict the $\dot b$ in that definition to members of $\PP_\g \rest D^0$. Using the inductive hypothesis on $\PP_\g \rest D^0$-names, it follows that $\card{\PP_\a \rest D^0} \leq \k$.
Each nice $\PP_\a \rest D^0$-name $\dot x$ for a subset of $\w$ has the form $\dot x = \set{\< n,p \>}{p \in \A_n}$, where each $\A_n$ is an antichain in $\PP_\a \rest D^0$.
Because $\PP_\a$ has the ccc, each $\A_n$ is countable.
Thus there are $\k^{\aleph_0} = \k$ names $\dot x$ with this form.
\end{proof}

In particular, there are $\k$ nice $\PP_\k$-names $\dot x$ for subsets of $\w$ having the property that $\mathrm{hdom}(\dot x) \sub D^0$.
But for each $\a \in \A_1$, $\bar \phi^{\a,0}(\dot B_\a)$ is just such a name.
By the pigeonhole principle, and the fact that $\k < \nu$, this implies that there is some $\A_2 \sub \A_1$ with $\card{\A_2} = \nu$ such that
$\bar \phi^{\a,0}(\dot B_\a) = \bar \phi^{\b,0}(\dot B_\b)$ whenever $\a,\b \in \A_2$. 
Reindexing the $\dot B_\a$'s if necessary, we may (and do) assume $0 \in \A_2$.

We now proceed to define a new $\PP_\k$-name $\dot B_\lambda$ for a subset of $\w$. 

First, for all $\mu \in C$ with $\mu < \lambda$, let $D^\lambda_\mu = R \cap I_\mu$. (Recall that $D^\a_\mu \cap I_\mu \sub R$ whenever $\mu \in C$ and $\mu < \lambda$ and $\a \in \A_1$. Thus, in this case, $D^\lambda_\mu \cap D^\a_\mu = R \cap I_\mu$ for all $\a \in \A_1 \supseteq \A_2$.)
Next, for each $\mu \in C$ with $\mu > \lambda$, let $D^\lambda_\mu$ be a countable subset of $I_\mu$ with $D^\lambda_\mu \cap \bigcup \set{D^\a_\mu}{\a < \lambda} = R \cap I_\mu$, and with $|D^\lambda_\mu \setminus R| = |D^0_\mu \setminus R|$.
Some such set exists because $\card{\bigcup \set{D^\a_\mu}{\a < \lambda}} = \lambda < \mu = |I_\mu|$, so we may take $D^\lambda_\mu$ to be any subset (of the appropriate size) of $I_\mu \setminus \bigcup \set{D^\a_\mu}{\a < \lambda}$, together with $R \cap I_\mu$.

For each $\mu \in C$, fix a bijection $\phi^\lambda_\mu: D^\lambda_\mu \to \w$ such that $\phi^\lambda_\mu \rest R = \phi^0_\mu \rest R$. 
For each $\a < \lambda$ and $\mu \in C$, let $\phi^{\a,\lambda}_\mu$ and $\phi^{\lambda,\a}_\mu$ be the involutions of $I_\mu$ defined just like the $\phi^{\a,\b}_\mu$ above, but using $\phi^\lambda_\mu$ in place of $\phi^\b_\mu$.
This naturally extends the system of involutions described above: for any $\a,\b \leq \lambda$,
\begin{itemize}
\item[$\circ$] $\phi^{\a,\b}_\mu$ maps $D^\a_\mu$ onto $D^\b_\mu$ and $D^\b_\mu$ onto $D^\a_\mu$, but acts as the identity on the rest of $I_\mu$,
\item[$\circ$] each $\phi^{\a,\b}_\mu$ acts as the identity on $R \cap I_\mu$, and
\item[$\circ$] $\phi^{\a,\b}_\mu = \phi^{\g,\b}_\mu \circ \phi^{\a,\g}_\mu$ for any $\g \leq \lambda$.
\end{itemize}
For each $\a,\b \leq \lambda$, let $\phi^{\a,\b} = \bigotimes_{\mu \in C}\phi^{\a,\b}_\mu$. By Lemma~\ref{lem:permutations}, $\bar \phi^{\a,\b} \rest \PP_\k$ is an automorphism of $\PP_\k$ for each $\a,\b \leq \lambda$.

Because $\mathrm{hdom}_\mu(q) \sub R$ for all $\mu$ and $\phi^{0,\lambda}$ fixes $R$, $\bar \phi^{0,\lambda}(q) = q$. Recall 
$$q \forces \text{ the evaluation $B_0$ of $\dot B_0$ codes a nonempty Borel subset of $\R$}.$$
Applying standard facts about automorphisms of forcing posets,
$$\bar \phi^{0,\lambda}(q) \forces \text{ the evaluation of $\bar \phi^{0,\lambda}(\dot B_\lambda)$ codes a nonempty Borel subset of $\R$}.$$
Let $B_\lambda$ denote the evaluation of $\dot B_\lambda$ in $V^{\PP_\k}$, and let $\tilde B_\lambda$ be the Borel set that it codes.
So, in particular, the displayed statement above implies that $q = \bar \phi^{0,\lambda}(q) \forces \tilde B_\lambda \neq \0$.
To show $q \forces \bigcup_{\a < \lambda}\tilde B_\a \neq \R$, it now suffices to show $q \forces$ $\tilde B_\lambda \cap \tilde B_\b = \0$ for all $\b < \lambda$.

Fix $\b < \lambda$. 
By our choice of the sets $D^\lambda_\mu$, we have $D^\lambda_\mu \cap D^\b_\mu \sub R$ for all $\mu \in C$, and therefore $D^\lambda \cap D^\b \sub R$. 
Because $\set{D^\a}{\a \in \A_2}$ is a $\Delta$-system of size $\nu$, and because $|D^\b| \leq |C| \cdot \aleph_0 < \k < \nu$, there is some $\a \in \A_2$ with $D^\a \cap D^\b \sub R$ (regardless of whether $\b \in \A_2$).
Fix some such $\a$.

Note that $\phi^{\a,\lambda}$ sends $D^\a$ to $D^\lambda$, while acting as the identity on $D^\b$. 
And because $0,\a \in \A_2$, our choice of $\A_2$ implies $\bar \phi^{0,\a}(\dot B_0) = \dot B_\a$.
It follows that 
$$\bar \phi^{\a,\lambda}(\dot B_\a) = \bar \phi^{\a,\lambda} \circ \bar \phi^{0,\a}(\dot B_0) = \bar \phi^{0,\lambda}(\dot B_0) = \dot B_\lambda,$$
while $\bar \phi^{\a,\lambda}(\dot B_\b) = \dot B_\b$. 
But 
$$q \forces \ B_\a \text{ and } B_\b \text{ code disjoint Borel sets}.$$
Applying the automorphism $\bar \phi^{\a,\lambda}$ of $\PP_\k$,
$$\bar \phi^{\a,\lambda}(q) = q \forces \ B_\lambda \text{ and } B_\b \text{ code disjoint Borel sets}.$$
Because $\b$ was arbitrary, this shows that $q \forces$ $\tilde B_\lambda \cap \tilde B_\b = \0$ for all $\b < \lambda$.
In other words, $q \forces \bigcup_{\a < \lambda}\tilde B_\a \neq \R$.
\end{proof}

Note that Theorem~\ref{thm:main} only gives us models in which $\ser$ is regular and $|\Spec| \leq \ser$. Both of these are merely artifacts of the proof: neither need be true of $\Spec$ in general. As mentioned near the end of Section 2, it is possible that $\dom = \continuum = \k$ for some singular cardinal $\k$ of uncountable cofinality, and this makes $\ser = \k$ also. And of course, Theorem~\ref{thm:BM} shows it is possible to have $|\Spec| > \ser$.

Theorem~\ref{thm:main} also cannot produce a model in which $\Spec \cap \k$ is unbounded for a regular limit cardinal $\k$. Theorem~\ref{thm:BM} implies that this is possible, but in such a case automatically gives $\k \in \Spec$. This suggests the following question:

\begin{question}
Is $\mathfrak{sp}(\text{\emph{\small closed}})$ or $\mathfrak{sp}(\text{\emph{\small Borel}})$ closed under regular limits?
\end{question}

We also do not know whether our final condition on $C$ is an artifact of the proof.

\begin{question}
Is it consistent that $\aleph_\w \in \Spec$ but $\aleph_{\w+1} \notin \Spec$? What about $\bspec$?
\end{question}

For any pointclass $\Gamma$, define
$$\specg = \set{|\P| > \aleph_0}{\vphantom{2^i}\P \text{ is a partition of } \R \text{ into sets in } \Gamma }\!.$$
To strengthen Theorem~\ref{thm:main}, we could replace $\bspec$ in the conclusion of the theorem with $\specg$ for some pointclass $\Gamma$ strictly containing the Borel sets.
The ultimate result in this direction would be to take $\Gamma = \text{OD}(\R)$, since this is essentially the largest pointclass of interest from a descriptive point of view.


While we have chosen to focus on $\bspec$ and $\Spec$ until now, let us observe that minor modifications to the proof of Theorem~\ref{thm:main} will give a proof of the stronger version, where $\bspec$ is replaced by $\specod$ in the conclusion.
Consequently, Corollaries~\ref{cor:1} and \ref{cor:2} also can be strengthened by replacing $\bspec$ with $\specod$. 
This has the interesting consequence that large cardinals do not imply the existence of a partition of $\R$ into $\aleph_2$ projective sets. This consequence is already known, by the work of Blass \cite{Blass} mentioned at the start of this section.

We close with three more open questions.

\begin{question}
Is $\mathfrak{sp}(\text{\emph{\small Borel}}) = \mathfrak{sp}(\text{\emph{\footnotesize OD($\R$)}})$?
\end{question}

\begin{question}
Given $\a < \w_1$, is it consistent that $\mathfrak{sp}(\text{\emph{\small $\mathbf{\Pi}^0_\a$}}) \neq \mathfrak{sp}(\text{\emph{\small $\mathbf{\Pi}^0_{\a+1}$}})$?
\end{question}

The answer to this question is currently known for $\a = 1,2$. 
For $\a=1$, Miller proved the consistency of $\mathfrak{sp}(\text{\emph{\small $\mathbf{\Pi}^0_1$}}) \neq \mathfrak{sp}(\text{\emph{\small $\mathbf{\Pi}^0_2$}})$ (i.e., $\Spec \neq \mathfrak{sp}(\text{\small $G_\dlt$})$) by constructing a model with $\aleph_1 = \mathrm{cov}(\mathcal M) < \ser$ \cite[Theorem 5]{Miller}. This shows $\Spec \neq \mathfrak{sp}(\text{\small $G_\dlt$})$, because if $\aleph_1 = \mathrm{cov}(\mathcal M)$, then $\aleph_1 \in \mathfrak{sp}(\text{\small $G_\dlt$})$. (Proof: if $\set{F_\a}{\a < \w_1}$ is a collection of closed nowhere dense sets covering $\R$, then $\{ F_\a \setminus \bigcup_{\xi < \a} :\, \a < \w_1 \}$ is a partition of $\R$ into $G_\dlt$ sets.)
For $\a = 2$, the consistency of $\mathfrak{sp}(\text{\emph{\small $\mathbf{\Pi}^0_2$}}) \neq \mathfrak{sp}(\text{\emph{\small $\mathbf{\Pi}^0_3$}})$ (i.e., $\mathfrak{sp}(\text{\emph{\small $G_\dlt$}}) \neq \mathfrak{sp}(\text{\emph{\small $F_{\s \dlt}$}})$) follows from Hausdorff's theorem that $\aleph_1 \in \mathfrak{sp}(\text{\emph{\small $\mathbf{\Pi}^0_3$}})$, together with a result of Fremlin and Shelah \cite{Fremlin&Shelah} stating that $\min \!\big( \mathfrak{sp}(\text{\emph{\small $\mathbf{\Pi}^0_2$}}) \big) \geq \mathrm{cov}(\mathcal M)$.

\begin{question}
Is it consistent that $\mathfrak{sp}(\text{\emph{\small closed}}) \neq \mathfrak{sp}(\text{\emph{\small $G_\dlt$}})$ and $\mathfrak{cov}(\mathcal M) > \aleph_1$?
\end{question}

%
%
%



\end{document}